\newtheorem{theorem}{Theorem}[section]
\newtheorem{lemma}{Lemma}[section]
\newtheorem{definition}{Definition}[section]
\newtheorem{exmple}{Example}[section]
\numberwithin{equation}{section}
\numberwithin{equation}{section}
\begin{document}
\title{Gradient Dynamic Approach to the Tensor Complementarity Problem}
\author{Xuezhong Wang\thanks{E-mail: xuezhongwang77@126.com. School of
Mathematics and Statistics, Hexi University, Zhangye, 734000, P. R. of
China.  This author is supported by the National Natural Science
Foundation of China under grant 11771099.}
\and Maolin Che\thanks{E-mail: chncml@outlook.com and cheml@swufe.edu.cn. School of Economic Mathematics, Southwest University of Finance and Economics, Chengdu, 611130, P. R. of China.  This author is supported by the Fundamental Research Funds for the Central Universities under grant JBK 1801058.} \and Liqun Qi\thanks{ E-mail: maqilq@polyu.edu.hk. Department of Applied Mathematics, the Hong Kong Polytechnic University, Hong Kong. L. Qi is supported by the Hong Kong Research Grant Council (Grant No. PolyU 15302114, 15300715, 15301716 and 15300717)}
\and Yimin Wei\thanks{
Corresponding author (Y. Wei).
E-mail: ymwei@fudan.edu.cn and yimin.wei@gmail.com. School of
Mathematical Sciences and Shanghai Key Laboratory of Contemporary
Applied Mathematics, Fudan University, Shanghai, 200433, P. R. of
China. This author is supported by the National Natural Science
Foundation of China under grant 11771099.}
}
\maketitle

\begin{abstract}
Nonlinear gradient dynamic approach for solving the tensor complementarity problem (TCP) is presented. Theoretical analysis shows that each of the defined dynamical system models ensures the convergence performance.  The computer simulation results further substantiate that the considered dynamical system can solve the tensor complementarity problem (TCP).
\end{abstract}

{\bf Key words}: Tensor complementarity problem; dynamical system; activation function; convergence.\\

{\bf AMS subject classifications:} 15A18, 15A69, 65F15, 65F10, 90C11

\newpage

\section{Introduction}
Let $\mathbb{R}$  be the real field. A tensor can be regarded as a high-order generalization of a matrix, which takes a form
\begin{equation*}
\mathcal{A}=(a_{i_1i_2\dots i_m}),\quad a_{i_1i_2\dots i_m}\in\mathbb{R},\quad 1\leq i_1,i_2,\ldots,i_m\leq n.
\end{equation*}
Such a multi-array $\mathcal{A}$ is said to be an $m$-order $n$-dimensional real tensor. We denote the set of all $m$-order $n$-dimensional real tensor
by $\mathbb{R}^{[m,n]}$.
Let $x\in\mathbb{R}^{n}$, the $n$ dimensional vector $\mathcal{A}x^{m-1}$ is defined as \cite{Qi1}:
\begin{equation}\label{eq2}
 (\mathcal{A}x^{m-1})_{i}=\sum\limits_{i_2,\ldots,i_m}^{n}a_{ii_2,\ldots,i_m}x_{i_2}\ldots x_{i_m},\; i=1,2,\ldots,n,
\end{equation}
where $x_i$ denotes the $i$th component of $x$.
%It is clear that each component of $\mathcal{A}x^{m-1}$ is a homogeneous polynomial of %degree $m-1$.

For any $q\in\mathbb{R}^{n} $, we consider the tensor complementarity problem, a special class of nonlinear complementarity problems, denoted by TCP$(\mathcal{A},q)$: finding $x\in\mathbb{R}^{n}$ such that
 \[x\geq 0,\quad\mathcal{A}x^{m-1}+q\geq0,\quad x^{\top}(\mathcal{A}x^{m-1}+q)=0. \]
This is a generalization of the linear complementarity problem. So far many
researchers have paid attention to this topic \cite{Bai2016Global,Che2016Positive,Ding2015P,Du2018,Gowda2016Z,Huang2017Formulating,Luo2017The,Song2015Properties,Song2016Tensor,Xie2017An} because of its applications such
as DNA micro-arrays, communication and $n$-person non-cooperative game \cite{Huang2017Formulating, Luo2017The}. In \cite{Song2015Properties}, Song and Qi showed that TCP$(\mathcal{A}, q)$  has a solution if and only if $\mathcal{A}$ is nonnegative with its diagonal entries being positive. Song and Qi \cite{Song2016Tensor} discussed the solution of TCP$(\mathcal{A}, q)$, when $\mathcal{A}$ is strictly semi-positive.
 Che, Qi and Wei \cite{Che2016Positive} discussed the existence and uniqueness of solution
of TCP$(\mathcal{A}, q)$ with some special tensors. Luo, Qi and Xiu \cite{Luo2017The} obtained the sparsest solutions to TCP$(\mathcal{A}, q)$ with a $\mathcal{Z}$-tensor. Song and Yu \cite{Song2016Properties} obtained global
upper bounds of the solution of the TCP$(\mathcal{A}, q)$ with a strictly semi-positive
tensor. Gowda, Luo, Qi and Xiu \cite{Gowda2016Z} studied the various equivalent conditions for the existence of solution to TCP$(\mathcal{A}, q)$ with a $\mathcal{Z}$-tensor.
Ding, Luo and Qi \cite{Ding2015P} showed the properties of TCP$(\mathcal{A}, q)$ with a $P$-tensor.
Bai, Huang and Wang \cite{Bai2016Global} considered the global uniqueness and solvability
for TCP$(\mathcal{A}, q)$ with a strong $P$-tensor. Wang, Huang and Bai \cite{Wang2016Exceptionally} gave the solvability of TCP$(\mathcal{A}, q)$ with exceptionally regular tensors.
%Huang, Suo and
%Wang \cite{} presented several classes of $\mathcal{Q}$-tensors.
%In contrast to the great achievements in the theory of the TCP, the studies on numerical methods for solving TCP is very few.

Numerical algorithms for solving tensor complementarity problems have been
proposed recently. Xie, Li and Xu \cite{Xie2017An} presented numerical methods for finding the least solution
to the TCP$(\mathcal{A}, q)$ with a $\mathcal{Z}$-tensor. Liu, Li and Vong \cite{Liu2017Tensor} proposed the modulus equation for TCP$(\mathcal{A}, q)$ and based on this equation, they developed the corresponding nonsmooth Newton's method for solving TCP$(\mathcal{A}, q)$.  Huang and Qi \cite{Huang2017Formulating} proposed a smoothing type algorithm. Han \cite{Han2018} introduced a Kojima-Megiddo-Mizuno type continuation method for solving TCP$(\mathcal{A}, q)$.
Du et al. \cite{Du2018} showed that the tensor absolute equation is equivalent
to a generalized tensor complementarity problem and proposed an inexact Levenberg-Marquardt method for solving the tensor absolute equation. Also, Du and Zhang \cite{Du2018a} gave a mixed integer programming model to solve the TCP$(\mathcal{A}, q)$.

The introduction of dynamic models in optimization started in 1980s
\cite{chua1984nonlinear,hopfield1985neural}. Since then, significant research results have been achieved for various optimization problems,
such as linear programming \cite{zak1995solving}, quadratic programming \cite{bouzerdoum1993neural}, linear complementarity problems \cite{LIZHI19999}, and nonlinear programming \cite{rodriguezvazquez1990nonlinear}. The essence of dynamic approach for optimization is
to establish an energy function (nonnegative).
% and a dynamic system which is a representation of an
%(artificial) neural network.
 The dynamic system is normally in the form of first-order ordinary differential
equations. It is expected that for an initial state, the dynamic system will approach its static
state (or equilibrium point) which corresponds the solution of the underlying optimization problem.
An important requirement is that the energy function decreases monotonically as the dynamic system
approaches an equilibrium point.

The gradient dynamical system (GDS) has now been regarded as a powerful alternative for online computation \cite{Zhang2006A},
%such as matrix inversion \cite{Zhang2009Global} and inner inverse and Drazin inverse %\cite{Predrag2017Gradient,Stanimirovic2015Recurrent},
%linear and nonlinear equations solving \cite{Zhang2009Convergence},
linear complementarity problems \cite{LIZHI19999} and nonlinear complementarity problems \cite{LIAO2001},  in view of its high speed processing nature and its convenience of hardware implementation in practical applications \cite{Feng2006Gradient,Ramezani2013Nonlinear}. To effectively solve the linear complementarity problem and nonlinear complementarity problems, linear gradient dynamical system (LGDS) is thus obtained \cite{LIZHI19999,LIAO2001}. Note
that the LGDS with application to online linear (nonlinear) complementarity problems solving have been investigated by the previous work \cite{LIZHI19999,LIAO2001}.
The existence and the convergence of the trajectory of the dynamical system are addressed in detail in \cite{LIZHI19999,LIAO2001}. In addition, Liao, Qi and Qi \cite{LIAO2001} also explore the stability properties, such as the stability in the sense of Lyapunov, the asymptotic stability and the exponential stability, for the dynamical system model.
However, to the best of our knowledge, there exists few research results on solving the TCP$(\mathcal{A}, q)$ via the nonlinear gradient dynamical system (NGDS). Motivated by this
reason, we thus design, propose and investigate different NGDS for solving TCP$(\mathcal{A}, q)$ by defining error-monitoring functions. It is theoretically proved that defined  NGDS converge to the theoretical solution.
Through illustrative computer-simulation examples, the efficacy and the superiority of the proposed dynamical system model for online computation of the TCP$(\mathcal{A}, q)$ with $\mathcal{A}x^{m-1}+q$ is a $P$-function is well-verified.

%Before ending this introductory section,
The main contributions of the paper are listed as follows.

(1) One type of NGDS for solving the TCP$(\mathcal{A}, q)$ with $\mathcal{A}x^{m-1}+q$ for any $q\in \mathbb{R}^{n}$ is a $P$-function are presented;

(2) Theoretical analysis shows the convergence of the presented gradient neural networks to the theoretical solution of the TCP$(\mathcal{A}, q)$ with $\mathcal{A}x^{m-1}+q$ for any $q\in \mathbb{R}^{n}$ is a $P$-function;

(3) Computer simulation results via illustrative examples are presented, compared and
discussed. Generated numerical results comparatively substantiate that the NGDS with nonlinear activation function are much more efficient in solving the TCP$(\mathcal{A}, q)$ with $\mathcal{A}x^{m-1}+q$ for any $q\in \mathbb{R}^{n}$ is a $P$-function, as compared to the NGDS with linear activation function proposed in this paper.

This paper is organized as follows. In Section \ref{sec2}, we recall some preliminary definitions and
results. Dynamical system models with different nonlinear activation functions for online solution of the TCP$(\mathcal{A}, q)$ with $\mathcal{A}x^{m-1}+q$  for any $q\in \mathbb{R}^{n}$ is a $P$-function are presented in Section \ref{sec3}. Convergence properties of the presented dynamical system models will be discussed in Section \ref{sec4}.
Illustrative numerical examples are presented in Section \ref{Examples}.

\section{Preliminaries}\label{sec2}
Han \cite{HAN201749} gave a method to partially symmetrize tensor $\mathcal{A}=(a_{i_1i_2\ldots i_m})\in \mathbb{R}^{[m,n]}$ with respect to the indices $i_2\ldots i_m$, which will be used in sequel. In detail, the partially symmetrized tensor $\mathcal{\widehat{A}}=(\widehat{a}_{i_1i_2\ldots i_m})$ as follows
\[\widehat{a}_{i_1i_2\ldots i_m}=\frac{1}{(m-1)!}\sum\limits_{\pi}a_{i_1\pi(i_2\ldots i_m)},\]
where the sum is over all the permutations $\pi(i_2\ldots i_m)$. For any $\mathcal{A}\in \mathbb{R}^{[m,n]}$, we can get a partially symmetrized tensor $\mathcal{\widehat{A}}\in \mathbb{R}^{[m,n]}$ such that $\mathcal{A}x^{m-1}=\mathcal{\widehat{A}}x^{m-1}$, by an averaging procedure.

%\begin{lemma}{\bf (\cite{Kleinman2003The})} \label{lema2.3}
%If $A\in \mathbb{R}^{n\times n}$ and $B\in \mathbb{R}^{n\times n}$ are two real symmetric positive semi-definite matrices. Then
%\[\lambda_{\min}(A)\mathrm{Tr}(B)\leq \mathrm{Tr}(AB)\leq \lambda_{\max}(A)\mathrm{Tr}(B),\]
%where $\lambda_{\min}(A)$ and $\lambda_{\max}(A)$ denote the smallest eigenvalue and the largest eigenvalue of $A$, respectively.
%\end{lemma}
\subsection{Function tensors and matrices}
We first recall the definitions of $P$-matrix, $P$-tensor and $P$-function as follows.
\begin{lemma}{\bf (\cite{Berman1994Plemmons})}\label{pmat}
Let $A\in \mathbb{R}^{n\times n}$, then
 $A$ is called a $P$-matrix if all the principal minors of $A$ are positive.

%(ii) $A$ is called a $P_0$-matrix if all the principal minors of $A$ are nonnegative.
\end{lemma}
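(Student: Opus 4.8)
The plan is to recognize at the outset that this statement, although labeled a lemma, is in fact a \emph{definition}: it introduces the terminology ``$P$-matrix'' by declaring that $A\in\mathbb{R}^{n\times n}$ is called a $P$-matrix precisely when every principal minor of $A$ is positive. Accordingly there is no mathematical assertion to establish. The word ``if'' appearing here is definitional, not a logical implication awaiting verification, so no proof obligation arises and there is nothing to prove.

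The only thing I would check is that the condition is well-posed, which is immediate. For a square matrix $A$ of order $n$, each principal minor is the determinant of a submatrix $A_{\alpha\alpha}$ formed by selecting the rows and columns indexed by a nonempty subset $\alpha\subseteq\{1,2,\ldots,n\}$; there are $2^{n}-1$ such minors, each a well-defined real number. Hence the requirement ``all the principal minors of $A$ are positive'' amounts to an unambiguous finite family of strict inequalities $\det(A_{\alpha\alpha})>0$, and the class of matrices being named is thereby completely determined. No further argument is required to justify the statement as written.

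If the development in the later sections should call for it, one would then prove, as genuinely separate results, the standard equivalent characterizations of the $P$-matrix class — for example the sign-nonreversal property or the positivity of the real eigenvalues of every principal submatrix. Those are distinct theorems with their own proofs, however, and lie outside the scope of the purely definitional statement at hand; the only obstacle worth flagging is the temptation to conflate this naming convention with one of those deeper characterizations.
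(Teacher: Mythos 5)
Your reading is correct: despite the \texttt{lemma} label, this statement is purely a definition of the term $P$-matrix (cited from Berman and Plemmons), and the paper accordingly supplies no proof for it. Your observation that the condition is well-posed as a finite family of inequalities $\det(A_{\alpha\alpha})>0$ is all that could reasonably be said, so your treatment matches the paper's.
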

 %Clearly,  nonsingular $M$-matrices and nonsingular $H_{+}$ matrices are $P$-matrices and %$P$-matrices are nonsingular.
 % a $P$-matrix is a $P0$-matrix.

Throughout the paper, we assume that $F:\mathbb{R}^n\rightarrow \mathbb{R}^n$ is a continuously differentiable function.
\begin{definition}{\bf (\cite{Facchinei2003Finite})}\label{pfunction}
A function $F:K\subseteq \mathbb{R}^n\rightarrow \mathbb{R}^n$ is called a
%(i) $P_0$-function on $K$ if for all $x,y\in \mathbb{R}^n$ with $x\neq y$, there is an index $i_0=i_0(x,y)$ with
%\begin{equation}\label{peq2}
% x_{i_0}\neq  y_{i_0},\quad \text{and}\quad (x_{i_0}-y_{i_0})[F_{i_0}(x)-F_{i_0}(y)]\geq 0.
%\end{equation}
 $P$-function on $K$ if for all $x,y\in \mathbb{R}^n$ with $x\neq y$, it holds that
\begin{equation}\label{peq1}
 \max_{i}(x_i-y_i)[F_{i}(x)-F_{i}(y)]>0.
\end{equation}
\end{definition}
\begin{lemma}{\bf (\cite{Facchinei2003Finite})}\label{plema}
$F$:\;$\Omega\supset K\subseteq \mathbb{R}^n\rightarrow \mathbb{R}^n$ be continuously differentiable on the
open set $\Omega$ containing the set $K$. $F$ is a $P$-function on $K$ if and only if Jacobian matrix $F'(x)$ is a $P$-matrix for all $x\in K$.

%(ii) $F$ is a $P_0$-function on $K$ if and only if Jacobian matrix $F'(x)$ is a $P_0$-matrix for all $x\in K$.

\end{lemma}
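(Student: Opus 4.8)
The engine of the proof is the classical algebraic characterization of $P$-matrices (Fiedler--Pt\'ak), which I would record first since it is the bridge between the matrix condition of Lemma \ref{pmat} and the $P$-function inequality \eqref{peq1}: a matrix $A\in\mathbb{R}^{n\times n}$ is a $P$-matrix if and only if it reverses the sign of no nonzero vector, i.e. for every $z\neq0$ there is an index $i$ with $z_i(Az)_i>0$, equivalently $\max_i z_i(Az)_i>0$. This follows from the positivity of all principal minors, and it is what makes the two sides of the claimed equivalence comparable.

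For the ``if'' direction (Jacobian a $P$-matrix on $K$ $\Rightarrow$ $F$ a $P$-function) I would argue by contradiction. If $F$ is not a $P$-function there exist $x\neq y$ with $(x_i-y_i)[F_i(x)-F_i(y)]\le0$ for every $i$; writing $d=x-y$ and applying the mean value theorem componentwise along the segment produces points $\xi^i\in(y,x)$ with $F_i(x)-F_i(y)=\langle\nabla F_i(\xi^i),d\rangle$, so that $d_i(Md)_i\le0$ for all $i$, where $M$ has $i$-th row $\nabla F_i(\xi^i)^\top$. The step I expect to be the main obstacle sits exactly here: $M$ mixes gradient rows evaluated at different points $\xi^i$, and such a matrix need not be a $P$-matrix even though each genuine Jacobian $F'(z)$ along the segment is one (the set of $P$-matrices is not convex), so the sign-reversal characterization cannot be invoked for $M$ directly. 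I would instead run a continuity argument along $z(t)=y+td$, tracking $\eta_i(t)=d_i[F_i(z(t))-F_i(y)]$ with $\eta_i(0)=0$ and $\eta_i(1)\le0$ and using $\eta_i'(t)=d_i(F'(z(t))d)_i$ together with the $P$-matrix property of each $F'(z(t))$; pushing this to a contradiction is the Gale--Nikaido / Mor\'e--Rheinboldt-type core of the lemma.

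For the ``only if'' direction (P-function $\Rightarrow$ Jacobian a $P$-matrix) I would differentiate the defining inequality. Fixing $x\in K$ and a nonzero direction $z$, I apply \eqref{peq1} to the admissible pair $x$ and $y=x-tz$ for small $t>0$; a Taylor expansion gives $tz_i\cdot[F_i(x)-F_i(x-tz)]=t^2 z_i(F'(x)z)_i+o(t^2)$, so after dividing by $t^2$ and letting $t\to0$ one obtains $\max_i z_i(F'(x)z)_i\ge0$. The delicate point I would flag is strictness: this limiting argument only yields the non-strict ($P_0$) inequality, and upgrading it to the strict $P$-matrix condition is precisely where care is required --- the scalar map $t\mapsto t^3$ is a $P$-function whose derivative vanishes at the origin, so the converse is sharp only at the $P_0$ level. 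I therefore expect the clean equivalence to rest on the $P_0$-function / $P_0$-matrix correspondence, with the strict statement carrying an implicit local nondegeneracy.
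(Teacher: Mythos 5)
The paper offers no proof of this lemma at all --- it is quoted from Facchinei--Pang --- so your attempt can only be measured against the statement itself and the standard argument in that reference. On the ``only if'' direction your diagnosis is correct and is in fact a correction of the lemma as printed: the expansion with $y=x-tz$ yields only $\max_i z_i\bigl(F'(x)z\bigr)_i\ge 0$, i.e.\ that $F'(x)$ is a $P_0$-matrix, and your example $F(t)=t^3$ (a $P$-function on $\mathbb{R}$ with $F'(0)=0$) shows the strict $P$-matrix conclusion is false in general. The cited source states the result as two one-sided implications ($P$-matrix Jacobian on a rectangle $\Rightarrow$ $P$-function; $P$-function $\Rightarrow$ $P_0$-matrix Jacobian), so the ``if and only if'' here is an over-statement; identifying that is the most valuable part of your proposal, but it also means you cannot prove the lemma as written.

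The genuine gap is in the ``if'' direction. You rightly explain why the row-wise mean value theorem fails (the matrix with rows $\nabla F_i(\xi^i)^{\top}$ at distinct points need not be a $P$-matrix, the class not being convex), but the replacement you offer --- tracking $\eta_i(t)=d_i[F_i(y+td)-F_i(y)]$ and ``pushing this to a contradiction'' --- is exactly the step left undone, and it does not close as sketched: knowing that for each fixed $t$ \emph{some} index satisfies $\eta_i'(t)=d_i\bigl(F'(z(t))d\bigr)_i>0$ does not prevent every $\eta_i(1)$ from being $\le 0$, because the favourable index may change with $t$; running the intermediate-value argument on $\sigma(t)=\max_i\eta_i(t)$ only reproduces another violating pair closer to $y$, with no termination. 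The Mor\'e--Rheinboldt/Gale--Nikaido proof needs more structure: one restricts to the index set $I=\{i:x_i\neq y_i\}$, uses that principal submatrices of $P$-matrices are again $P$-matrices, and exploits rectangularity (or at least convexity) of the domain to keep the frozen-coordinate slice inside it --- a hypothesis the lemma as stated also omits. Without that reduction your argument is a correct identification of the difficulty rather than a proof.
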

\begin{definition}{\bf (\cite{Bai2016Global,Liu2017Tensor})}\label{ptensor}
Let $\mathcal{A}\in\mathbb{R}^{[m,n]}$, then $\mathcal{A}$ is called\\
{\rm (i)} a $P$-tensor, if for each nonzero $x\in \mathbb{R}^n$ there exists index $i$ such that
\begin{equation}\label{peq2}
 x_i(\mathcal{A}x^{m-1})_i> 0;
\end{equation}
{\rm (ii)} a strong $P$-tensor, if and only if $\mathcal{A}x^{m-1}$ is a $P$-function;\\
{\rm (iii)} a strong strictly semi-positive tensor if $\mathcal{A}x^{m-1}+q$ is a $P$-function in $\mathbb{R}_{+}^{n}$ for any $q\in \mathbb{R}^n$, where $\mathbb{R}_{+}^{n}$ denotes the set of all the nonnegative vectors.
\end{definition}
From the definitions of the $P$-tensor, the strong $P$-tensor and the strong strictly semi-positive tensor, it is easy to see that every strong $P$-tensor must be a $P$-tensor, a strong strictly semi-positive tensor is a $P$-tensor. In additional, we can obtain if for any $q\in \mathbb{R}^n$, $\mathcal{A}x^{m-1}+q$ is a $P$-function, then tensor $\mathcal{A}$ is a $P$-tensor, furthermore, we have Jacobian matrix $(\mathcal{A}x^{m-1})'$ is a $P$-matrix.

Bai, Huang, and Wang \cite{Bai2016Global} proved that a TCP possesses the
global uniqueness and solvability property if the tensor is a strong $P$-tensor.  Liu {\it et al.} \cite{Liu2017Tensor} showed that a TCP possesses the
global uniqueness and solvability property if the tensor is strong strictly semi-positive tensor.  We summarize their results in the following theorem.
\begin{lemma}{\bf (\cite{Bai2016Global,Liu2017Tensor})}\label{pexist} Let $\mathcal{A}\in \mathbb{R}^{[m,n]}$,\\
{\rm (i)} if $\mathcal{A}$ is a $P$-tensor, then, for any $q\in \mathbb{R}^{n}$, the solution set of $TCP(\mathcal{A},q)$ is nonempty and compact;\\
{\rm (ii)} if $\mathcal{A}$ is a strong $P$-tensor, then $TCP(\mathcal{A},q)$ has the global uniqueness and solvability property;\\
{\rm (iii)} if $\mathcal{A}$ is a strong strictly semi-positive tensor, then $TCP(\mathcal{A},q)$ has the global uniqueness and solvability property.
\end{lemma}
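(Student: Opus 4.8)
The plan is to regard TCP$(\mathcal{A},q)$ as the nonlinear complementarity problem associated with $F(x)=\mathcal{A}x^{m-1}+q$, and then to treat existence and uniqueness separately, exploiting the interplay between the complementarity conditions and the $P$-tensor / $P$-function hypotheses. A first observation I would record is that adding the constant vector $q$ does not affect the differences $F_i(x)-F_i(y)=(\mathcal{A}x^{m-1})_i-(\mathcal{A}y^{m-1})_i$; hence whenever $\mathcal{A}x^{m-1}$ is a $P$-function (part (ii)) or $\mathcal{A}x^{m-1}+q$ is a $P$-function on $\mathbb{R}^n_+$ for every $q$ (part (iii)), the map $F$ inherits the $P$-function property on the relevant domain for each fixed $q$.

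For the uniqueness statements in (ii) and (iii) I would argue by contradiction. Suppose $x\neq y$ are two solutions; then $x,y\in\mathbb{R}^n_+$, $F(x),F(y)\in\mathbb{R}^n_+$, and $x_iF_i(x)=y_iF_i(y)=0$ for every $i$. Expanding gives
\[
(x_i-y_i)\bigl(F_i(x)-F_i(y)\bigr)=-x_iF_i(y)-y_iF_i(x)\le 0
\]
for each index $i$, since each of $x_iF_i(y)$ and $y_iF_i(x)$ is a product of nonnegative numbers. This contradicts the defining inequality \eqref{peq1} of a $P$-function, which demands a strictly positive value at some index. In part (iii) both solutions lie in $\mathbb{R}^n_+$, so the restriction of the $P$-function property to $\mathbb{R}^n_+$ is exactly what the computation uses, and the same argument closes both cases.

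Existence is the substantive part, and it is where (i) does the real work: a strong $P$-tensor and a strong strictly semi-positive tensor are both $P$-tensors (as noted after Definition \ref{ptensor}), so (ii) and (iii) follow from (i) combined with the uniqueness just established. To prove (i) I would first show the solution set is bounded. If not, take solutions $x^{(k)}$ with $\|x^{(k)}\|\to\infty$, set $u^{(k)}=x^{(k)}/\|x^{(k)}\|$, and pass to a subsequence $u^{(k)}\to u$ with $\|u\|=1$, $u\ge 0$. Dividing the feasibility and complementarity relations by the appropriate powers of $\|x^{(k)}\|$ and letting $k\to\infty$ annihilates the $q$-term and yields $u\ge0$, $\mathcal{A}u^{m-1}\ge0$, $u^\top\mathcal{A}u^{m-1}=0$; the last equality forces $u_i(\mathcal{A}u^{m-1})_i=0$ for every $i$, contradicting the $P$-tensor property \eqref{peq2} at the nonzero vector $u$. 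Closedness of the solution set is immediate from continuity, so it is compact. Nonemptiness I would obtain by a topological degree argument for the map $H(x)=\min\{x,\mathcal{A}x^{m-1}+q\}$, whose zeros are exactly the solutions of TCP$(\mathcal{A},q)$: the boundedness estimate, applied uniformly along the homotopy $H_t(x)=\min\{x,\mathcal{A}x^{m-1}+tq\}$ with $t\in[0,1]$, keeps all zeros in a fixed ball, so the degree is well defined and invariant along the path and hence equals the degree of the homogeneous map $H_0$, which is nonzero for a $P$-tensor.

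The hard part will be making the existence argument in (i) fully rigorous: verifying that the homotopy confines every zero to a common bounded region (so the degree is defined along the whole path) and that the degree of the limiting homogeneous map is nonzero. This is precisely where the $P$-tensor hypothesis, rather than the mere $P$-function property, is indispensable, as it controls the behavior of $\mathcal{A}x^{m-1}$ at infinity. By contrast, the uniqueness computation and the reduction of (ii) and (iii) to (i) are routine once the $P$-function inequality \eqref{peq1} is in hand.
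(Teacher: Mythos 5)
The paper offers no proof of this lemma at all: it is imported verbatim from \cite{Bai2016Global,Liu2017Tensor} (``We summarize their results in the following theorem''), so there is no internal argument to measure yours against and it must be judged on its own terms. Most of it holds up. The uniqueness argument for (ii) and (iii) is correct and complete: componentwise complementarity gives $x_iF_i(x)=y_iF_i(y)=0$, the expansion $(x_i-y_i)(F_i(x)-F_i(y))=-x_iF_i(y)-y_iF_i(x)\le 0$ holds at every index, and this contradicts \eqref{peq1}; the observation that the constant $q$ cancels in the differences correctly transfers the $P$-function property to $F$. The normalization argument for boundedness of the solution set in (i) is also sound and, as you say, applies uniformly in the homotopy parameter $t$. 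The reduction of existence in (ii) and (iii) to (i) leans on the paper's remark after Definition \ref{ptensor} that strong $P$-tensors and strong strictly semi-positive tensors are $P$-tensors, which is fair game here.

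The one genuine gap is the closing assertion that the degree of the homogeneous map $H_0(x)=\min\{x,\mathcal{A}x^{m-1}\}$ is nonzero: you flag it as ``the hard part'' but never supply the argument, and without it the existence claim in (i) --- on which (ii) and (iii) also rest --- is unproved. The standard way to close it is a second homotopy inside the min-map: set $G_s(x)=\min\{x,(1-s)x+s\,\mathcal{A}x^{m-1}\}$ for $s\in[0,1]$, so that $G_1=H_0$ and $G_0=\min\{x,x\}$ is the identity. If $G_s(x)=0$ for some $x\ne 0$, then $x\ge 0$ and $(1-s)x_i^2+s\,x_i(\mathcal{A}x^{m-1})_i=0$ for every $i$; at the index $i$ guaranteed by \eqref{peq2} both summands are nonnegative and at least one is strictly positive (the first when $s<1$, since $x_i\ne 0$; the second when $s>0$), a contradiction. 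Hence the origin is the only zero of every $G_s$, the homotopy is admissible on any sufficiently large ball $B$, and $\deg(H_0,B,0)=\deg(\mathrm{id},B,0)=1$. With that paragraph inserted, your outline becomes a complete proof of the lemma.
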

\subsection{The Fischer-Burmeister NCP function}
Below, we introduce the classical nonlinear complementarity problem (NCP). It will be
shown that the tensor complementarity problem TCP$(\mathcal{A},q)$ is a
special kind of nonlinear complementarity problem.
\begin{definition}{\bf (\cite{Facchinei2003Finite})}\label{ncp}
Given a mapping $F$ : $\mathbb{R}^n\rightarrow \mathbb{R}^n$, the nonlinear complementarity problem,
denoted by NCP(F), is to find a vector $x\in \mathbb{R}^n $ satisfying
\begin{equation}\label{eqncp}
  x\geq 0,\quad F(x)\geq 0,\quad x^{\top}F(x)=0.
\end{equation}
\end{definition}
Note that if $F(x)=\mathcal{A}x^{m-1}+q$, then NCP reduces to TCP$(\mathcal{A},q)$.

Many solution methods developed for NCP or related problems are based on
reformulating them as a system of equations using so-called NCP-functions. Here,
a function $\phi:\;\mathbb{R}^2\rightarrow \mathbb{R}$ is called an NCP-function if
\begin{equation}\label{ncpfun}
 \phi(a,b)=0\Leftrightarrow ab=0,\quad a\geq 0, \quad b\geq 0.
\end{equation}
Here, we use the following Fischer-Burmeister NCP-function \cite{ Fischer1992A},
\[\phi(a,b)=\sqrt{a^2+b^2}-a-b,\]
which are widely used in nonlinear complementarity problems.

\begin{lemma}\label{lefb}
The square of $\phi(a,b)$ is continuously differentiable; $\phi(a,b)$ is twice continuously differentiable everywhere except at the origin; but it is strongly semismooth at the origin.
\end{lemma}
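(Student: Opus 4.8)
The plan is to handle the three assertions separately, using throughout the fact that $\phi$ is $C^\infty$ away from the origin (since $r(a,b):=\sqrt{a^2+b^2}$ is smooth on $\mathbb{R}^2\setminus\{0\}$) and that both $r$ and $a+b$, and hence $\phi$, are positively homogeneous of degree one. The only delicate behaviour in every case occurs at the origin. The middle assertion is the easiest: away from the origin $\phi=r-a-b$ inherits the $C^\infty$, and in particular $C^2$, smoothness of $r$, while at the origin $\phi$ fails to be even differentiable, as one sees from $\phi(t,0)=|t|-t$, which is not differentiable at $t=0$. This already pins down $C^2$-smoothness on exactly $\mathbb{R}^2\setminus\{0\}$.

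For the differentiability of $\phi^2$ I would expand $\phi^2=2(a^2+b^2+ab)-2(a+b)\sqrt{a^2+b^2}$. The polynomial part is $C^\infty$, so it suffices to prove that $g(a,b):=(a+b)\sqrt{a^2+b^2}$ is $C^1$. On $\mathbb{R}^2\setminus\{0\}$ this is immediate, with $\partial_a g=\sqrt{a^2+b^2}+(a+b)a/\sqrt{a^2+b^2}$ and the symmetric expression for $\partial_b g$. At the origin I would evaluate the difference quotients directly to get $\partial_a g(0,0)=\partial_b g(0,0)=0$, and then show continuity of the gradient by noting that $|(a+b)a|/r\le 2r\to 0$, so $\partial_a g(a,b)\to 0=\partial_a g(0,0)$ as $(a,b)\to 0$, and likewise for $\partial_b g$. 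Continuity of both partials on all of $\mathbb{R}^2$ yields $g\in C^1$ and therefore $\phi^2\in C^1$.

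The third assertion is where the homogeneity carries the argument. The function $\phi$ is globally Lipschitz (both $r$ and $a+b$ are) and directionally differentiable, and $\partial\phi(h)=\{\nabla\phi(h)\}$ is a singleton for every $h\neq 0$. Positive homogeneity of degree one gives the directional derivative $\phi'(0;h)=\phi(h)$, and Euler's identity gives $\nabla\phi(h)\cdot h=\phi(h)$ for $h\neq 0$; one checks this by the direct computation $\nabla\phi(h)\cdot h=(a^2+b^2)/r-(a+b)=r-a-b=\phi(h)$. Consequently the semismoothness remainder $Vh-\phi'(0;h)$ vanishes identically for $V\in\partial\phi(h)$ with $h\neq 0$, which is trivially $O(\|h\|^2)$, and this is precisely strong semismoothness at the origin.

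I expect the main obstacle to be the careful verification in the second step that the candidate partial derivatives of $g$ extend \emph{continuously} to the origin, rather than merely existing there; this requires the homogeneity-based estimate $|(a+b)a|/r\le 2r$ to control the quotient terms. By contrast, once the degree-one homogeneity and Euler's identity are invoked, the strong semismoothness claim reduces to the exact cancellation $\nabla\phi(h)\cdot h=\phi(h)$ and becomes essentially a one-line consequence.
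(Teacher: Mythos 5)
The paper does not actually prove Lemma \ref{lefb}: it is stated as a known property of the Fischer--Burmeister function, imported from the NCP literature (Fischer; Facchinei--Soares; De Luca--Facchinei--Kanzow), and the text proceeds directly to the formula for $\partial\phi$. So there is no in-paper argument to compare against; what you have written is a self-contained proof of the cited fact, and it is correct. Your decomposition $\phi^2=2(a^2+b^2+ab)-2(a+b)\sqrt{a^2+b^2}$ reduces the first claim to showing $g(a,b)=(a+b)\sqrt{a^2+b^2}\in C^1$, and the homogeneity estimate $|(a+b)a|/\sqrt{a^2+b^2}\le 2\sqrt{a^2+b^2}$ does give continuity of the gradient at the origin (where the difference quotients vanish); this is essentially the classical computation. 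The second claim is immediate as you say, since $r=\sqrt{a^2+b^2}$ is $C^\infty$ off the origin while $\phi(t,0)=|t|-t$ already kills differentiability at $0$. For the third claim, the observation that $\phi$ is globally Lipschitz, positively homogeneous of degree one, and $C^\infty$ on $\mathbb{R}^2\setminus\{0\}$, combined with Euler's identity $\nabla\phi(h)\cdot h=\phi(h)=\phi'(0;h)$, makes the semismoothness remainder vanish identically rather than merely being $O(\|h\|^2)$; this is the standard slick route and is exactly why the Fischer--Burmeister function is strongly semismooth at the origin. The only cosmetic point worth adding is the trivial case $h=0$ in the remainder estimate, and, depending on which of the equivalent definitions of strong semismoothness one adopts, the remark that $\phi(h)-\phi(0)-\nabla\phi(h)\cdot h=0$ as well, so the argument covers both formulations.
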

By a simple calculation procedure, we have
\[\partial \phi(a,b)=\left\{\begin{array}{lc}
                             \left(\frac{a}{\sqrt{a^2+b^2}}-1,\frac{b}{\sqrt{a^2+b^2}}-1\right), & a^2+b^2\neq 0, \\
                             (\alpha-1,\beta-1)\;\text{with}\; \alpha^2+\beta^2\leq 1, & a^2+b^2=0.
                           \end{array}\right.
\]
From Lemma \ref{lefb}, we have $\partial \phi(a,b)$ is a singleton except at the region.
\subsection{Activation function}\label{subsec2.3}

The matrix-valued activation function $\mathcal{F}(E)$, $E=(e_{ij})$, is defined as $(f(e_{ij}))$, $i,j=1,2,\ldots,n$, where $f(\cdot)$ is a scalar-valued monotonically-increasing odd function. The following real-valued linear and nonlinear odd and monotonically increasing functions $f(\cdot)$ are widely used.\\

Linear function
  $$f_{\rm{lin}}(x)=x;$$

Bipolar-sigmoid function
$$
 f_{\rm{bs}} (x,q)=\frac{1+\exp(-q)}{1-\exp(-q)}\frac{1-\exp(-qx)}{1+\exp(-qx)},\;q>2;
$$

Power-sigmoid function
$$
 f_{\rm{ps}}(x,p,q)=\left\{\begin{array}{cl}
                         x^p, & \mathrm{if }\ |x|\geq1 \\
                         \frac{1+\exp(-q)}{1-\exp(-q)}\frac{1-\exp(-qx)}{1+\exp(-qx)},    &  \mathrm{otherwise}
                       \end{array},\right.\;q\geq2,\;p\geq3;
$$

Smooth power-sigmoid function
$$
f_{\rm{sps}}(x,p,q)=\frac{1}{2}x^{p}+\frac{1+\exp(-q)}{1-\exp(-q)}\frac{1-\exp(-qx)}{1+\exp(-qx)},\;p\geq3,\;q>2.
$$

In general, any monotonically increasing odd activation function $f(\cdot)$  can be used for
the construction of the dynamical system.
%\cite{Wang2017Nonlinearly}.
As it was shown in \cite{Li2013Accelerating,Zhang2009Global}, the
convergence rate can be thoroughly improved by an appropriate activation function. So far,
the influence of various nonlinear activation functions was investigated for different dynamical system models. We
investigate this scenario on several dynamical system models which are introduced in this paper.

\section{Nonlinear dynamical system methods}\label{sec3}
In this section, the error monitoring function is designed for deriving a gradient dynamical system (GDS). Specifically, by defining different EFs, different GDSs can be obtained for online solution of the  TCP$(\mathcal{A},q)$. We construct nonlinear gradient dynamical system models, called NGDS, and consider their convergence.

Since  TCP$(\mathcal{A},q)$ can be equivalently reformulated as finding a solution of the following equation:
\begin{equation}\label{ERRf}
 \Phi(x)=\left(
           \begin{array}{c}
             \phi(x_1,(\mathcal{A}x^{m-1}+q)_1) \\
                 \phi(x_2,(\mathcal{A}x^{m-1}+q)_2) \\
             \vdots \\
            \phi(x_n,(\mathcal{A}x^{m-1}+q)_n) \\
           \end{array}
         \right)=0.
\end{equation}
We note that $\Phi(x)$ is locally Lipschitz continuous everywhere, so that Clarke's \cite{Clarke1983Optimization} generalized Jacobian
$\partial\Phi(x)$ is well defined at any point.

Thus, we can define the error monitoring function as
\[\varepsilon(t)=\varepsilon(x(t))=\frac{1}{2}\|\Phi(x(t))\|_{2}^2.\]
The function $\varepsilon(x(t))$ is continuously differentiable \cite{facchinei1997a}, which follows from the semi-smoothness of $\Phi(x)$.

In order to force $\varepsilon(t)$ to converge to zero, the
negative of the gradient (i.e., -$\partial(\varepsilon(t))/\partial x$) is used as the
descent direction, which leads to the so-called GDS design formula in the form of a first-order differential
equation:
\begin{equation}\label{GNNF}
\frac{\mathrm{d}x}{\mathrm{d}t}=-\gamma\frac{\partial(\varepsilon(t))}{\partial x}=-\gamma V^{\top}\Phi(x),
\end{equation}
where  $V\in\partial\Phi(x)$ and $\gamma$ is a positive scaling constant. Note that $\gamma$ corresponds to the reciprocal of a capacitance parameter, of
which the value should be set as large as the hardware
would permit, or appropriately large for modeling and experimental purposes. The dynamic equation (\ref{GNNF}) will be simply termed the LGDS model.

% By adopting GNN design formula (\ref{GNNF}), the differential equation of a GNN (i.e., a dynamic equation)
%can thus be established as follows:
%\begin{equation}\label{GNN-I}
%\frac{\mathrm{d}x}{\mathrm{d}t}=-\alpha(\mathcal{\hat{A}}x^{m-2})^{\top}(\mathcal{A}x^{m-1}-b),
%\end{equation}
%where $\alpha=\gamma(m-1)>0$, which should be set to be as large as the hardware permits.

Following the principle of nonlinear activation in the LGDS model defined in Section \ref{subsec2.3}, the conventional LGDS model (\ref{GNNF}) can be improved
into the following nonlinear GDS model by exploiting a nonlinear activation function array
$\mathcal{F}(\cdot)$:
\begin{equation}\label{GNN-I2}
\frac{\mathrm{d}x}{\mathrm{d}t}=-\gamma V^{\top}\mathcal{F}(\Phi(x)),
\end{equation}
where $\mathcal{F}(\cdot)$ denotes a matrix-valued activation function. The dynamic equation (\ref{GNN-I2}) will be simply termed the NGDS model.

We give the precise definition of $V$ which is necessary for the implementation of our model.
\begin{lemma}{\bf(\cite{deluca1996a})}
Any $V$ with the following structure is an element of $\partial\Phi(x)$
\[V=D_{a}(x)+(m-1)D_{b}(x)\mathcal{\widehat{A}}x^{m-2},\]
where $D_{a}(x)=$diag$(a_1(x),\ldots,a_n(x))$, $D_{b}(x)=$diag$(b_1(x),\ldots,b_n(x))$ are diagonal matrices whose $i$th diagonal elements are given by
\[a_i(x)=\frac{x_i}{\sqrt{x_i^2+(\mathcal{A}x^{m-1}+q)_i^2}}-1,\quad b_i(x)=\frac{(\mathcal{A}x^{m-1}+q)_i}{\sqrt{x_i^2+(\mathcal{A}x^{m-1}+q)_i^2}}-1\]
if $(x_i,(\mathcal{A}x^{m-1}+q)_i)\neq (0,0)$, and by
\[a_i(x)=\alpha_i-1,\quad b_i(x)=\beta_i-1\]
for every $(\alpha_i,\beta_i)\in \mathbb{R}^2$ such that $\alpha_i^2+\beta_i^2\leq 1$ if $(x_i,(\mathcal{A}x^{m-1}+q)_i)= (0,0)$.
\end{lemma}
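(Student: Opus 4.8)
The plan is to recognize $\Phi$ as the composition of the separable Fischer--Burmeister map with a smooth tensor map, apply the chain rule componentwise at regular points, and reserve the degenerate points for a direct limiting-Jacobian argument. First I would fix notation: write $F(x)=\mathcal{A}x^{m-1}+q$, so that $\Phi_i(x)=\phi(x_i,F_i(x))$. Since $\mathcal{A}x^{m-1}=\widehat{\mathcal{A}}x^{m-1}$ and $\widehat{\mathcal{A}}$ is symmetric in its last $m-1$ indices, $F$ is a polynomial map whose Jacobian is the clean expression $F'(x)=(m-1)\widehat{\mathcal{A}}x^{m-2}$; this is precisely where the partial symmetrization of Section \ref{sec2} is used. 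Writing $\nabla F_i(x)^\top$ for the $i$-th row of $F'(x)$, the matrix in the statement is exactly $V=D_a(x)+D_b(x)F'(x)$, i.e. its $i$-th row equals $a_i(x)e_i^\top+b_i(x)\nabla F_i(x)^\top$.

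Next I would treat the regular case. If $(x_i,F_i(x))\neq(0,0)$ for every $i$, then $\phi$ is classically differentiable at each of its arguments, so $\Phi$ is continuously differentiable near $x$ and $\partial\Phi(x)=\{\Phi'(x)\}$ is a singleton. The ordinary chain rule gives the $i$-th row of $\Phi'(x)$ as $\partial_a\phi\,e_i^\top+\partial_b\phi\,\nabla F_i(x)^\top$, and substituting the explicit partial derivatives of $\phi$ recorded just before the statement produces exactly the $a_i,b_i$ of the first branch; hence $V=\Phi'(x)\in\partial\Phi(x)$.

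For the degenerate case, let $I_0=\{i:(x_i,F_i(x))=(0,0)\}$. For $i\notin I_0$ the pair $(a_i,b_i)$ is forced by continuity, while for $i\in I_0$ I must realize every $(\alpha_i,\beta_i)$ in the closed unit disk. The structural observation is that the rows of $V$ decouple: row $i$ depends only on $(\alpha_i,\beta_i)$. Because each inner map $x\mapsto(x_i,F_i(x))$ is continuously differentiable and $\phi$ is locally Lipschitz with the subdifferential displayed above, Clarke's chain rule for a $C^1$ inner map holds with equality and yields $\partial\Phi_i(x)=\{a_ie_i^\top+b_i\nabla F_i(x)^\top:(a_i,b_i)\in\partial\phi(x_i,F_i(x))\}$, so each admissible row lies in the scalar generalized gradient $\partial\Phi_i(x)$; equivalently $V$ lies in the Cartesian (C-)subdifferential $\prod_i\partial\Phi_i(x)$. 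To upgrade this to membership of the full matrix in $\partial\Phi(x)$ I would use the limiting-Jacobian definition: along $x^k=x+t_kd\to x$ the expansion $F_i(x+t_kd)=t_k\nabla F_i(x)^\top d+o(t_k)$ shows that for $i\in I_0$ the normalized pair $(x^k_i,F_i(x^k))/\|(x^k_i,F_i(x^k))\|$ tends to the direction of $(d_i,\nabla F_i(x)^\top d)$, and the identity block (the explicit appearance of $x_i$) allows direct steering of the first coordinate. Choosing a finite family of such directions realizes the boundary (unit-circle) rows as genuine limits $\lim_k\Phi'(x^k)$, and convexity of $\partial\Phi(x)$ then fills in the disks and recovers the stated $V$.

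The hard part will be this last step. Clarke's proposition on vector maps only gives $\partial\Phi(x)\subseteq\prod_i\partial\Phi_i(x)$, so the per-component computation alone does not prove the claim; I must produce the reverse inclusion by simultaneously steering all degenerate components to their prescribed approach rays while keeping $x^k$ in the region where $\Phi$ is differentiable, and then verify that the convex hull of the realizable limiting Jacobians is the full admissible family rather than a proper, coupled subset. This simultaneous-realization argument is the delicate point settled in \cite{deluca1996a}; everything else reduces to the chain-rule bookkeeping above.
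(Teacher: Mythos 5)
The paper offers no proof of this lemma at all: it is simply quoted from \cite{deluca1996a}, so the only thing your argument can be measured against is that source. Your chain-rule bookkeeping is correct and is the routine part: the identification $F'(x)=(m-1)\widehat{\mathcal{A}}x^{m-2}$ via the partial symmetrization, the observation that $V=D_a(x)+D_b(x)F'(x)$ row by row, and the regular case in which $\partial\Phi(x)$ is the singleton $\{\Phi'(x)\}$ are all exactly as they should be.

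The genuine gap is in your last step, and you have half-diagnosed it yourself. The inclusion that is actually provable --- and the one established in \cite{deluca1996a} --- is the overestimation $\partial\Phi(x)\subseteq\{D_a(x)+D_b(x)F'(x)\}$, i.e.\ every element of the generalized Jacobian has the stated form. The converse, which is what the lemma as worded asserts, does not follow from your plan: once you have realized a finite family of limiting Jacobians $J_j=\lim_k\Phi'(x^k)$ along chosen approach directions, any convex combination $\sum_j\lambda_jJ_j$ uses a \emph{single} set of weights $\lambda_j$ for all rows simultaneously, so you cannot place each degenerate row independently anywhere in its own disk; the convex hull of the realizable limits is in general a proper, coupled subset of the Cartesian product $\prod_i\partial\Phi_i(x)$. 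Hence ``convexity then fills in the disks'' is not a valid step, and deferring it to \cite{deluca1996a} does not rescue it, because that reference proves only the forward inclusion (and works with the product set precisely because $\partial\Phi(x)$ itself is hard to pin down). What is true, and all the paper needs to define and analyze the NGDS model, is weaker: every $V\in\partial\Phi(x)$ has the stated form, and particular matrices of this form --- those arising as limits of Jacobians along one approach direction, with the degenerate pairs $(\alpha_i,\beta_i)$ coupled through that direction --- do lie in $\partial_B\Phi(x)\subseteq\partial\Phi(x)$. Your argument becomes correct if the lemma is restated in that form; as a proof of the literal statement it leaves the essential reverse inclusion open, and that inclusion need not hold.
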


\section{Stability analysis}\label{sec4}
In this section, we address the stability issues on the dynamical system (\ref{GNN-I2}) to the solution of the  TCP$(\mathcal{A},q)$.

For given $x_{*}\in \mathbb{R}^{n}$ with $x_{*}$ is a solution of the  TCP$(\mathcal{A},q)$, define $\delta=\min\|x_{*}-u\|_2$ for any $u\in \mathbb{R}^{n}$. Thus, for any $0<\widehat{\delta}\leq\delta$, we define a neighbourhood of $x_{*}$ as
\begin{equation}\label{BQOP:equation16}
\mathbb{B}(x_{*};\widehat{\delta}):=\{x\mid\|x-x_{*}\|_2\leq\widehat{\delta}\},
\end{equation}
where $x\in\mathbb{R}^n$.

Now we recall some stability results from \cite{Zabczyk2015Mathematical} on the following differential equation:
\begin{equation}\label{eqnnew}
 \frac{\mathrm{d}x(t)}{\mathrm{d}t}=f(x(t)), \quad x(t_0)\in\mathbb{R}^n.
\end{equation}

The following classical results on the existence and uniqueness of the solution to (\ref{eqnnew}) hold.
\begin{definition}{\bf(\cite{Zabczyk2015Mathematical})} \label{def4.1}
Let $x(t)$ be a solution of (\ref{eqnnew}). An isolated equilibrium point $x^*$ is Lyapunov stable if for any $x(t_0)$ and any scalar $\epsilon>0$ there exists a $\hat{\delta}>0$
so that if $x(t_0)\in \mathbb{B}(x_{*},\widehat{\delta})$ then $\|x(t)-x_*\|_2<\epsilon$ for $t\geq t_0$.
\end{definition}
\begin{definition}{\bf(\cite{Zabczyk2015Mathematical})} \label{def4.2}
 An isolated equilibrium point $x_*$ is said to be asymptotic stable
if in addition to being a Lyapunov stable it has the property that
$x(t)\rightarrow x_*$ as $t\rightarrow +\infty $, if $x(t_0)\in \mathbb{B}(x_{*},\hat{\delta})$.
\end{definition}

Then we focus on a particular case where the equilibrium point is isolated. Let $S$ denote the solution set of the  TCP$(\mathcal{A},q)$ and $x\in S$  implies $\Phi(x)=0$. Togather with   $\mathcal{F}(\cdot)$ is odd and monotonically increasing function, we have $\mathcal{F}(\Phi(x))=0$, consequently, $\frac{\mathrm{d}x}{\mathrm{d}t}=0$. Hence, we have the following result.
\begin{theorem}\label{theq}
Every solution to the TCP$(\mathcal{A},q)$ is an equilibrium point of the dynamical system (\ref{GNN-I2}).
Conversely; if $x\in \mathbb{R}^n$ is an equilibrium of (\ref{GNN-I2}) and for any $q\in \mathbb{R}^n$, $\mathcal{A}x^{m-1}+q$ is a $P$-function, then $x\in S$.
\end{theorem}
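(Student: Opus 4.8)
The plan is to prove the two implications separately. The forward direction is exactly the observation already recorded just before the statement, so I would dispatch it quickly, and then concentrate on the converse, where the $P$-function hypothesis does the real work. For the forward direction, if $x\in S$ then $\Phi(x)=0$ by the reformulation (\ref{ERRf}); since the scalar activation $f$ is odd it satisfies $f(0)=0$, so $\mathcal{F}(\Phi(x))=0$ and the right-hand side of (\ref{GNN-I2}) vanishes, making $x$ an equilibrium.

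For the converse, suppose $x$ is an equilibrium, so (as $\gamma>0$) we have $V^{\top}\mathcal{F}(\Phi(x))=0$ for the $V$ furnished by the subdifferential lemma, namely $V=D_a(x)+D_b(x)J$ with $J:=(m-1)\widehat{\mathcal{A}}x^{m-2}=F'(x)$ and $F(x)=\mathcal{A}x^{m-1}+q$. Since $F$ is a $P$-function, Lemma \ref{plema} shows that $J$ is a $P$-matrix, and hence so is $J^{\top}$, because transposition preserves all principal minors. Writing $y:=\mathcal{F}(\Phi(x))$, the equilibrium condition becomes $D_a(x)y+J^{\top}D_b(x)y=0$, which I would rearrange as $J^{\top}w=-D_a(x)y$ with the test vector $w:=D_b(x)y$.

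I would then argue by contradiction: assume $\Phi(x)\neq 0$ and set $I:=\{i:\Phi_i(x)\neq 0\}\neq\emptyset$. Because $f$ is odd and increasing with $f(t)=0$ only at $t=0$, one has $I=\{i:y_i\neq 0\}$. The decisive step is the sign analysis of the Fischer--Burmeister derivatives $a_i(x),b_i(x)$: from their explicit form $a_i(x)\le 0$ and $b_i(x)\le 0$ always, while $a_i(x)=0$ forces $x_i>0$, $(\mathcal{A}x^{m-1}+q)_i=0$ (hence $\Phi_i(x)=0$) and $b_i(x)=0$ forces $x_i=0$, $(\mathcal{A}x^{m-1}+q)_i>0$ (hence again $\Phi_i(x)=0$). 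Consequently $i\in I$ implies $a_i(x)<0$, $b_i(x)<0$, and $a_i(x)b_i(x)>0$. Thus $w\neq 0$ (its $i$th entry $b_i(x)y_i$ is nonzero exactly on $I$), and a componentwise computation gives $w_i(J^{\top}w)_i=-a_i(x)b_i(x)y_i^2$, which is $<0$ for $i\in I$ and $0$ for $i\notin I$; hence $\max_i w_i(J^{\top}w)_i\le 0$. This contradicts the Fiedler--Pt\'ak characterization of the $P$-matrix $J^{\top}$, which guarantees $\max_i w_i(J^{\top}w)_i>0$ for the nonzero vector $w$. Therefore $\Phi(x)=0$, i.e. $x\in S$.

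I expect the main obstacle to be the sign bookkeeping for $a_i(x)$ and $b_i(x)$, specifically verifying that each vanishes only where the matching component $\Phi_i(x)$ already vanishes, so that the strict inequalities $a_i(x)<0$, $b_i(x)<0$ are available precisely on $I$, together with the choice $w=D_b(x)y$ that converts the equilibrium identity into a form to which the $P$-matrix characterization applies. Once these are in place, the contradiction is immediate and the remaining manipulations are routine.
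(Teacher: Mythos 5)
Your proof is correct and follows essentially the same route as the paper: the paper's proof simply notes that the Jacobian of $\mathcal{A}x^{m-1}+q$ is a $P$-matrix and then invokes ``analogy to Corollary 4.4'' of \cite{deluca1996a}, and your sign analysis of $a_i(x)$, $b_i(x)$ (each vanishing only where $\Phi_i(x)$ already vanishes) combined with the test vector $w=D_b(x)\mathcal{F}(\Phi(x))$ and the Fiedler--Pt\'ak characterization is precisely that stationary-point argument written out in full. One detail worth noting in your favour: you correctly adapt the argument to the equilibrium condition $V^{\top}\mathcal{F}(\Phi(x))=0$ rather than $V^{\top}\Phi(x)=0$, using that the odd, monotonically increasing activation preserves the support and signs of $\Phi(x)$ --- a point the paper's appeal to analogy leaves implicit.
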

\begin{proof}
We only need to address the second part of the Theorem. Since  $\mathcal{A}x^{m-1}+q$ is a continuously differentiable $P $-function, it's Jacobian matrix is a $P $-matrix for all $x\in \mathbb{R}^{n}$. Analogy to the proof of  \cite[Corollary  4.4]{deluca1996a}, we have  a stationary point of (\ref{GNN-I2}) is a solution to the TCP$(\mathcal{A},q)$.
We complete our proof.
\end{proof}

Note that if $\mathcal{A}$ is a strong $P$-tensor (strong strictly semi-positive tensor), then $TCP(\mathcal{A},q)$ has the global uniqueness and solvability property. Together with a strong $P$-tensor (strong strictly semi-positive tensor) is a $P$-tensor, we obtain the following results.
\begin{theorem}\label{strongp}
Let $\mathcal{A}$ be a strong $P$-tensor (strong strictly semi-positive tensor) and $x$ be a solution of the TCP$(\mathcal{A},q)$, then $x$ is an unique equilibrium point of the dynamical system (\ref{GNN-I2}).
\end{theorem}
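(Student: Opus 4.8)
The plan is to establish that the solution set $S$ of TCP$(\mathcal{A},q)$ coincides exactly with the set of equilibrium points of the dynamical system (\ref{GNN-I2}), and then to invoke the global uniqueness already recorded in Lemma \ref{pexist} to conclude that both sets are singletons. Everything needed is essentially in place: Lemma \ref{pexist}(ii)--(iii) supplies global uniqueness and solvability, while Theorem \ref{theq} provides the two-way correspondence between solutions and equilibria.

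First I would record existence. Since $\mathcal{A}$ is a strong $P$-tensor (respectively a strong strictly semi-positive tensor), Lemma \ref{pexist}(ii) (respectively (iii)) guarantees that TCP$(\mathcal{A},q)$ possesses exactly one solution, say $x_*$; by the forward implication of Theorem \ref{theq}, $x_*$ is an equilibrium of (\ref{GNN-I2}). It then remains only to show that there is no other equilibrium. To that end I would take an arbitrary equilibrium $\bar x$ and aim to apply the converse implication of Theorem \ref{theq}, which turns any equilibrium into a genuine solution provided $\mathcal{A}x^{m-1}+q$ is a $P$-function; once $\bar x \in S$ is known, global uniqueness forces $\bar x = x_*$.

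Supplying the $P$-function hypothesis is the delicate point, and it splits into the two advertised cases. For a strong $P$-tensor the map $\mathcal{A}x^{m-1}$ is a $P$-function by Definition \ref{ptensor}(ii), and since adding the constant vector $q$ leaves the differences $F_i(x)-F_i(y)$ appearing in (\ref{peq1}) untouched, $\mathcal{A}x^{m-1}+q$ is again a $P$-function on all of $\mathbb{R}^n$, so the converse of Theorem \ref{theq} applies directly. The main obstacle is the strong strictly semi-positive case, where Definition \ref{ptensor}(iii) only asserts the $P$-function property on $\mathbb{R}_+^n$ rather than on all of $\mathbb{R}^n$; here I would verify that the converse of Theorem \ref{theq} remains valid when restricted to the nonnegative orthant on which the TCP is posed, so that every equilibrium still maps into $S$ there, after which global uniqueness from Lemma \ref{pexist}(iii) again yields $\bar x = x_*$ and completes the proof.
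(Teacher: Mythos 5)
Your proposal follows the same route as the paper: combine the solution--equilibrium correspondence of Theorem \ref{theq} with the global uniqueness and solvability property from Lemma \ref{pexist}. You are in fact more careful than the paper, whose proof simply cites Theorem \ref{theq} without checking its hypothesis; your observation that adding the constant vector $q$ leaves the differences in (\ref{peq1}) unchanged, so that a strong $P$-tensor makes $\mathcal{A}x^{m-1}+q$ a $P$-function on all of $\mathbb{R}^n$, correctly closes the first case. The one place your argument does not close is the strong strictly semi-positive case, and the fix you sketch does not work as stated: Definition \ref{ptensor}(iii) gives the $P$-function property only on $\mathbb{R}_{+}^{n}$, but an equilibrium of (\ref{GNN-I2}) is a priori just a point of $\mathbb{R}^n$ where $V^{\top}\mathcal{F}(\Phi(x))$ vanishes, and nothing confines such stationary points to the nonnegative orthant; so ``rerunning the converse of Theorem \ref{theq} restricted to $\mathbb{R}_{+}^{n}$'' leaves possible equilibria with negative components unaccounted for. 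This is a genuine loose end, but it is one you identified rather than created --- the paper's own proof invokes Theorem \ref{theq} for both tensor classes without comment and therefore carries exactly the same gap in the strong strictly semi-positive branch.
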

\begin{proof}
Since $\mathcal{A}$ is a strong $P$-tensor (strong strictly semi-positive tensor), then $TCP(\mathcal{A},q)$ has the global uniqueness and solvability property. That is if $x$ is a solution of the TCP$(\mathcal{A},q)$, then $x$ is a unique solution of the TCP$(\mathcal{A},q)$. From the Theorem \ref{theq}, we get $x$ is an unique equilibrium point of the neural network (\ref{GNN-I2}). The proof is thus completed.
\end{proof}
%\subsection{Convergence of NGNN-I models}
We have the following result of the convergence of the NGDS model.
\begin{theorem} \label{theorem4.1}
Given $\widehat{\delta}>0$ and $x_{*}$ be a solution of the TCP$(\mathcal{A},q)$, if nonzero vector $x(0)\in \mathbb{B}(x_{*};\widehat{\delta})$ and  $\mathcal{A}x(t)^{m-1}+q$ is a $P$-function for any $q\in \mathbb{R}^n$, then the state $x(t)$ of the {\em NGDS} model (\ref{GNN-I2}), starting from the initial state $x(0)\in \mathbb{B}(x_{*};\widehat{\delta})$, converges to the solution $x_*\in \mathbb{R}^{n}$ of the  TCP$(\mathcal{A},q)$.
\end{theorem}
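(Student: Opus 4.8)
The plan is to prove convergence by exhibiting a Lyapunov function for the flow (\ref{GNN-I2}) and invoking the asymptotic-stability criterion of Definition \ref{def4.2}, granting existence and uniqueness of the trajectory from the theory recalled around (\ref{eqnnew}). The delicate point is the choice of this function. The obvious candidate, the error-monitoring function $\varepsilon(x)=\tfrac12\|\Phi(x)\|_2^2$, succeeds only for the \emph{linear} activation: since $\nabla\varepsilon=V^{\top}\Phi$, along (\ref{GNNF}) one gets $\dot\varepsilon=-\gamma\|V^{\top}\Phi\|_2^2\le 0$, but along the nonlinear flow (\ref{GNN-I2}) one only obtains $\dot\varepsilon=-\gamma\,\Phi^{\top}VV^{\top}\mathcal{F}(\Phi)$, whose sign is not controlled because $\mathcal{F}(\Phi)$ and $\Phi$ need not stay aligned after multiplication by $VV^{\top}$. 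I would therefore work instead with the integral-type Lyapunov function
\begin{equation*}
 L(x)=\sum_{i=1}^{n}\int_{0}^{\Phi_i(x)} f(s)\,\mathrm{d}s,
\end{equation*}
which collapses to $\varepsilon$ when $f(s)=s$.

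First I would check that $L$ is an admissible Lyapunov function near $x_{*}$. Since $f$ is odd, monotonically increasing and $f(0)=0$, its primitive $G(a)=\int_0^a f(s)\,\mathrm{d}s$ satisfies $G(a)>0$ for $a\neq 0$ and $G(0)=0$; hence $L(x)\ge 0$, with $L(x)=0$ iff $\Phi(x)=0$, i.e.\ iff $x$ solves the TCP. By Lemma \ref{pexist} the $P$-function hypothesis forces $x_{*}$ to be the unique such point, so $L$ is positive definite on $\mathbb{B}(x_{*};\widehat{\delta})$. Differentiating and applying the chain rule gives $\nabla L=\sum_i f(\Phi_i)\nabla\Phi_i=V^{\top}\mathcal{F}(\Phi)$; where $\Phi$ fails to be differentiable the corresponding $\Phi_i=0$, so $f(\Phi_i)=0$ multiplies the ambiguous subgradient by zero and the identity persists for every $V\in\partial\Phi(x)$. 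Consequently, along (\ref{GNN-I2}),
\begin{equation*}
 \dot L=\nabla L^{\top}\dot x=\bigl(V^{\top}\mathcal{F}(\Phi)\bigr)^{\top}\bigl(-\gamma V^{\top}\mathcal{F}(\Phi)\bigr)=-\gamma\,\bigl\|V^{\top}\mathcal{F}(\Phi)\bigr\|_2^2\le 0 .
\end{equation*}

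It then remains to upgrade $\dot L\le 0$ to strict negativity off $x_{*}$. The $P$-function assumption makes the Jacobian $(\mathcal{A}x^{m-1}+q)'$ a $P$-matrix, and by the nonsingularity result of \cite{deluca1996a} every $V\in\partial\Phi(x)$ is then nonsingular; hence $V^{\top}\mathcal{F}(\Phi)=0$ is equivalent to $\mathcal{F}(\Phi)=0$, equivalently $\Phi(x)=0$, equivalently (by Theorem \ref{theq}) $x=x_{*}$. Thus $\dot L<0$ for every $x\in\mathbb{B}(x_{*};\widehat{\delta})\setminus\{x_{*}\}$, and $x_{*}$ is the unique equilibrium there (Theorem \ref{strongp}). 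Lyapunov stability confines the trajectory to the ball once $x(0)$ is close enough to $x_{*}$, and the strict decrease of $L$ together with the asymptotic-stability theorem (Definition \ref{def4.2}; equivalently LaSalle, the largest invariant subset of $\{\dot L=0\}$ being $\{x_{*}\}$) yields $x(t)\to x_{*}$. I expect the main obstacle to be precisely the two places where the $P$-function hypothesis must be cashed in: securing the clean sign of $\dot L$ by choosing the integral Lyapunov function rather than $\varepsilon$, and invoking nonsingularity of the generalized Jacobian to exclude spurious equilibria at which $\dot x=0$ but $\Phi(x)\neq 0$.
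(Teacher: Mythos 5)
Your proof is correct, but it takes a genuinely different route from the paper's, and the difference is substantive. The paper uses exactly the ``obvious candidate'' you reject: it takes $L=\varepsilon=\tfrac12\|\Phi\|_2^2$, computes $\dot L=-\gamma\,\Phi^{\top}VV^{\top}\mathcal{F}(\Phi)$, and then tries to control the sign by sandwiching this quantity between $\lambda_{\min}(VV^{\top})\,\Phi^{\top}\mathcal{F}(\Phi)$ and $\lambda_{\max}(VV^{\top})\,\Phi^{\top}\mathcal{F}(\Phi)$, concluding negativity from $\Phi^{\top}\mathcal{F}(\Phi)>0$. That eigenvalue sandwich is valid for a quadratic form $y^{\top}VV^{\top}y$ but not for the bilinear form $u^{\top}VV^{\top}v$ with $u=\Phi\neq v=\mathcal{F}(\Phi)$: a symmetric positive definite matrix can send a pair of vectors with positive inner product to a pair with negative inner product, so the paper's argument only closes cleanly in the linear-activation case $\mathcal{F}(\Phi)=\Phi$ --- precisely the obstruction you identify. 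Your integral Lyapunov function $L=\sum_i\int_0^{\Phi_i}f(s)\,\mathrm{d}s$ repairs this: its gradient is $V^{\top}\mathcal{F}(\Phi)$ (well defined across the nondifferentiability set since $f(\Phi_i)=0$ kills the ambiguous rows), so $\dot L=-\gamma\|V^{\top}\mathcal{F}(\Phi)\|_2^2\le 0$ with no alignment issue, and the nonsingularity of every $V\in\partial\Phi(x)$ under the $P$-matrix Jacobian (De Luca--Facchinei--Kanzow) identifies $\{\dot L=0\}$ with $\{\Phi=0\}=\{x_*\}$, after which LaSalle gives convergence. What the paper's choice buys is that the decreasing quantity is the residual $\|\Phi\|_2$ itself, which is what the numerical section monitors; what your choice buys is an actually valid sign argument for nonlinear activations, at the standard price of the integral-type merit function. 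The remaining loose ends in your write-up (precompactness of the trajectory for LaSalle, strict monotonicity of $f$ so that $G(a)>0$ for $a\neq 0$) are at the same level of informality as the paper's own treatment.
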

\begin{proof} We construct the following Lyapunov function:
\begin{equation}\label{lyap}
  L(t)=\varepsilon(x(t))=\frac{1}{2}\|\Phi(x(t))\|_{2}^2\geq 0,
\end{equation}
with its time derivative being
\begin{equation}\label{eq4.2}
\begin{array}{ccl}
 \frac{\mathrm{d}L(t)}{\mathrm{d}t}&= &\frac{1}{2}\frac{\mathrm{d}}{dt} \mathrm{Tr}\left(\Phi(x(t))^{\top}\Phi(x(t))\right)\\
 &= &\left(\Phi(x(t))^{\top}\frac{\mathrm{d}\Phi(x(t)))}{\mathrm{d}t}\right)\\
  &=&\left(\Phi(x(t))^{\top}V\frac{\mathrm{d}x}{\mathrm{d}t}\right)\\
    &=&-\gamma\left(\Phi(x(t))^{\top}VV^{\top}\mathcal{F}(\Phi(x(t)))\right)\\
 &=&-\gamma\mathrm{Tr}\left(VV^{\top}\Phi(x(t))^{\top}\mathcal{F}(\Phi(x(t)))\right).\\
 \end{array}
 \end{equation}
Since $V$ is a $P$-matrix, which follows from $\mathcal{A}x^{m-1}+q$ is a $P$-function for any $q\in \mathbb{R}^n$, then, $VV^{\top}$ is a symmetric positive definite matrix, and then, we have

\[\begin{array}{cll}
     \lambda_{\min}\mathrm{Tr}\left(\Phi(x(t))^{\top}\mathcal{F}(\Phi(x(t)))\right) & \leq& \mathrm{Tr}\left(VV^{\top}\Phi(x(t))^{\top}\mathcal{F}(\Phi(x(t)))\right) \\
     & \leq & \lambda_{\max}\mathrm{Tr}\left(\Phi(x(t))^{\top}\mathcal{F}(\Phi(x(t)))\right),
  \end{array}
\]
where $ \lambda_{\min}$ and $ \lambda_{\max}$ denote the smallest eigenvalue and the largest eigenvalue of matrix $VV^{\top}$, respectively.  $\mathrm{Tr}(A)$ is the trace of the matrix $A$.

Since the scalar-valued function $f(\cdot)$ is an odd and monotonically increasing function, it immediately follows $f(-x)=-f(x)$ and
\[f(x)\left\{\begin{array}{cl}
               >0 ,& \mathrm{if}\; x>0, \\
               =0, & \mathrm{if} \;x=0, \\
               <0 ,& \mathrm{if} \;x<0,
             \end{array}
\right. \]
which implies
\[xf(x)\left\{\begin{array}{cl}
               >0, & \mathrm{if}\; x\neq 0, \\
               =0,& \mathrm{if} \;x=0.
             \end{array}
\right. \]
It follows that
\[\mathrm{Tr}\left(\Phi(x(t))^{\top}\mathcal{F}(\Phi(x(t)))\right)\left\{\begin{array}{cl}
               >0, & \mathrm{if}\; \Phi(x(t))\neq 0, \\
               =0, & \mathrm{if} \;\Phi(x(t))=0.
             \end{array}
\right.\]
Due to the fact that the design parameter satisfies $\alpha>0$, in view of (\ref{eq4.2}), it follows that
\[\frac{\mathrm{d}L_1(t)}{\mathrm{d}t}\left\{\begin{array}{cl}
              <0, & \mathrm{if}\; \Phi(x(t))\neq 0, \\
               =0, & \mathrm{if} \;\Phi(x(t))=0.
               %\\
%               >0 & \mathrm{if} \;w_{ij}<0.
             \end{array}
\right.\]

%
%Since the scalar-valued function $f(\cdot)$ is an odd and monotonically increasing function and
%the design parameter satisfies $\beta>0$, which guarantees the final negative definiteness of $\frac{\mathrm{d}L(t)}{\mathrm{d}t}$. That is, $\frac{\mathrm{d}L(t)}{\mathrm{d}t}<0$ for $\mathcal{A}x^{m-1}-b\neq 0$, (equivalently, $x(t)\neq x^{*}$), and
%$\frac{\mathrm{d}L(t)}{\mathrm{d}t}=0$ for $\mathcal{A}x^{m-1}-b= 0$, (equivalently, $x(t)= x^{*}$).
By the Lyapunov theory,  $\Phi(x(t))$ can converge to zero; or, equivalently speaking, state $x(t)$ of NGDS model (\ref{GNN-I2}) is asymptotic stable at one of solutions $x_{*}$ with $\Phi(x_{*})=0$ starting from an initial state $x(0)\in \mathbb{B}(x_{*};\widehat{\delta})$. The proof is thus complete.
\end{proof}

We summarize the convergence result of the NGDS  method (\ref{GNN-I2}) when $\mathcal{A}$
is strong $P$-tensor (strong strictly semi-positive tensor) in the following theorem.
\begin{theorem} \label{theorem4.2}
Suppose that $\mathcal{A}\in \mathbb{R}^{[m,n]}$ is a strong $P$-tensor (strong strictly semi-positive tensor) and $q\in \mathbb{R}^n$. The state $x(t)$
of the {\em NGDS} model (\ref{GNN-I2}), starting from an arbitrary initial state $x(0)\in \mathbb{R}^{n}$, converges to the unique solution $x_*\in \mathbb{R}^{n}$ of the TCP$(\mathcal{A},q)$.
\end{theorem}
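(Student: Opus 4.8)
The plan is to globalize the local convergence already obtained in Theorem \ref{theorem4.1} by exploiting the two extra properties that the strong $P$-tensor (resp. strong strictly semi-positive) hypothesis supplies over a mere $P$-tensor: a globally valid $P$-function property, and the uniqueness of both the solution and the equilibrium. The Lyapunov machinery itself is already in place; the task is to show it applies from every initial state rather than only inside the ball $\mathbb{B}(x_*;\widehat{\delta})$.

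First I would record the global $P$-function property. If $\mathcal{A}$ is a strong $P$-tensor, then by Definition \ref{ptensor}(ii) the map $\mathcal{A}x^{m-1}$ is a $P$-function on all of $\mathbb{R}^n$; since shifting by the constant $q$ leaves every difference $F_i(x)-F_i(y)$ unchanged, $\mathcal{A}x^{m-1}+q$ is a $P$-function on $\mathbb{R}^n$ for every $q$ (and analogously on $\mathbb{R}_+^n$ in the strong strictly semi-positive case by Definition \ref{ptensor}(iii)). By Lemma \ref{plema} its Jacobian is then a $P$-matrix at every point, so every $V\in\partial\Phi(x)$ is a $P$-matrix and $VV^{\top}$ is symmetric positive definite at every point of any trajectory, not merely near $x_*$.

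Next I would invoke Lemma \ref{pexist} and Theorem \ref{strongp} to fix the unique solution $x_*$ of TCP$(\mathcal{A},q)$ and to note that $x_*$ is the \emph{unique} equilibrium of (\ref{GNN-I2}). Then I would reuse verbatim the Lyapunov function $L(t)=\tfrac12\|\Phi(x(t))\|_2^2$ from (\ref{lyap}) together with the derivative computation (\ref{eq4.2}): because $VV^{\top}$ is now positive definite at every point of the trajectory, the sign analysis carried out in the proof of Theorem \ref{theorem4.1} remains valid globally, yielding $\dot L(t)<0$ whenever $\Phi(x(t))\neq 0$ and $\dot L(t)=0$ exactly when $\Phi(x(t))=0$, i.e. exactly at $x_*$. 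To pass from ``$L$ is a strict global Lyapunov function with a single zero'' to ``$x(t)\to x_*$ from every $x(0)$'', I would apply LaSalle's invariance principle: the $\omega$-limit set of any bounded trajectory is contained in the largest invariant subset of $\{\dot L=0\}=\{x_*\}$, which forces $x(t)\to x_*$.

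The hard part will be the boundedness of trajectories that the LaSalle (or global-stability) step silently requires. A clean Lyapunov route needs $L$ to be radially unbounded, i.e. the Fischer--Burmeister merit function $\|\Phi(x)\|_2^2$ must be coercive, $\|\Phi(x)\|_2\to\infty$ as $\|x\|_2\to\infty$; for strong $P$-tensors this is what one expects from the $P$-function structure, but it must be argued or cited rather than assumed, since monotone decrease of $L$ alone only delivers convergence of $L(t)$ and $\dot L(t)\to 0$ on a precompact orbit. A second, more delicate point concerns the strong strictly semi-positive case, where Definition \ref{ptensor}(iii) guarantees the $P$-function property only on $\mathbb{R}_+^n$: since the flow (\ref{GNN-I2}) does not itself enforce nonnegativity, I would either restrict to initial data whose orbit remains in $\mathbb{R}_+^n$ or supply a separate positivity/invariance argument before the global Lyapunov estimate can be invoked on the whole space.
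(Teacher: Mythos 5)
Your proposal follows essentially the same route as the paper: the paper's own proof of Theorem \ref{theorem4.2} is only two sentences long, citing Theorem \ref{strongp} for the uniqueness of the solution (hence of the equilibrium) and then appealing to ``the proof of Theorem \ref{theorem4.1}'' for the Lyapunov argument, exactly as you do. The two difficulties you single out --- coercivity of the Fischer--Burmeister merit function (equivalently, boundedness of trajectories, without which the LaSalle/global-stability step does not go through) and the fact that in the strong strictly semi-positive case the $P$-function property is guaranteed only on $\mathbb{R}_{+}^{n}$ while the flow (\ref{GNN-I2}) need not preserve nonnegativity --- are genuine, and the paper addresses neither of them. So your write-up is, if anything, more careful than the published argument; but note that to constitute a complete proof it would still have to supply the coercivity and invariance arguments rather than merely flag them as the hard part.
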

\begin{proof}
Since $\mathcal{A}$ is a strong $P$-tensor (strong strictly semi-positive tensor), from Theorem \ref{strongp}, we know $TCP(\mathcal{A},q)$, for given $q\in \mathbb{R}^{n}$, has the global uniqueness and solvability property. Together with the proof of Theorem \ref{theorem4.1}, we can show that the state $x(t)$
of the { NGDS} model (\ref{GNN-I2}), starting from an arbitrary initial state $x(0)\in \mathbb{R}^{n}$, converges to the unique solution $x_*\in \mathbb{R}^{n}$ of the TCP$(\mathcal{A},q)$. We complete our proof.
\end{proof}
\section{Numerical examples}\label{Examples}
In this section, some computer-simulation examples are
demonstrated to verify the efficacy and the superiority of
the proposed neural network models. We apply the NGDS to the TCP$(\mathcal{A},q)$.

%In all tables, `ITE', `CPU' and `RES' denote the iterative steps of our methods, CPU time used (in
%seconds) when the algorithm terminated and residual $\|\mathcal{A}x_k^{m-1}-b\|_2$ of the original system (\ref{eq1}) at termination, respectively.
All computations are carried out in Matlab Version 2014a, which has a unit roundoff $ 2^{-53}\approx 1.1\times 10^{-16}$, on a laptop with Intel Core(TM) i5-4200M CPU (2.50GHz) and 7.89GB RAM.

\begin{exmple} \label{eg1}
Consider the tensor $\mathcal{A}\in \mathbb{R}^{[4,2]}$, from \cite{Bai2016Global,Liu2017Tensor}, defined by:
\[a_{1111}=1,\;a_{1222}=-1,\;a_{1122}=1,\;a_{2222}=1,\;a_{2111}=-1,\;a_{2211}=1,\]
and $a_{i_1i_2i_3i_4}=0$ otherwise. This tensor is a $P$-tensor, but not a strong $P$-tensor \cite{Bai2016Global}. However, it is a strong strictly semi-positive tensor \cite{Liu2017Tensor}.
\end{exmple}
In our tests, we take different vectors $q\in \mathbb{R}^2$ and initial vector as $x_0={\rm{rand}}(2,1)$. Trajectories of state variables corresponding to the LGDS with $\gamma=10$ are shown in Figure \ref{figure 1} (a), (c), (e) and Figure \ref{figure 2} (a) and (c), respectively.
Residual errors
\begin{equation}\label{eq6.2}
 \mathrm{Res}=\|\Phi(x)\|_2,
\end{equation}
\begin{figure}[H]
\centering
\subfigure[$\gamma=10$, $q=(-5,-3)^{\top}$.]{\includegraphics[width=3in, height=1.8in]{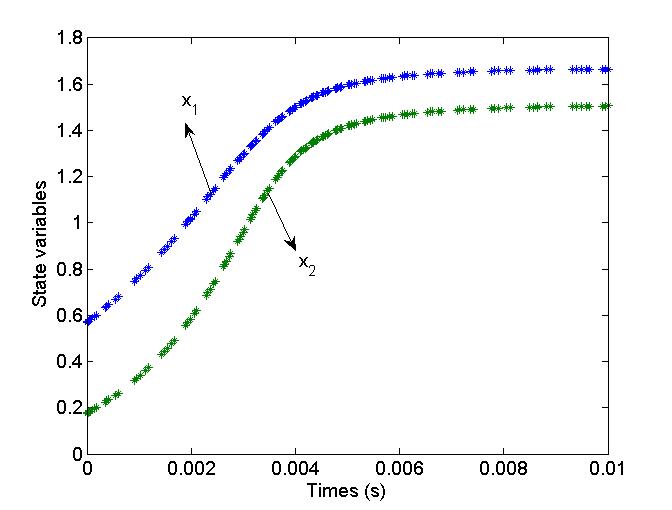}}
\subfigure[$\gamma=1\times 10^6$, $q=(-5,-3)^{\top}$.]{\includegraphics[width=3in, height=1.8in]{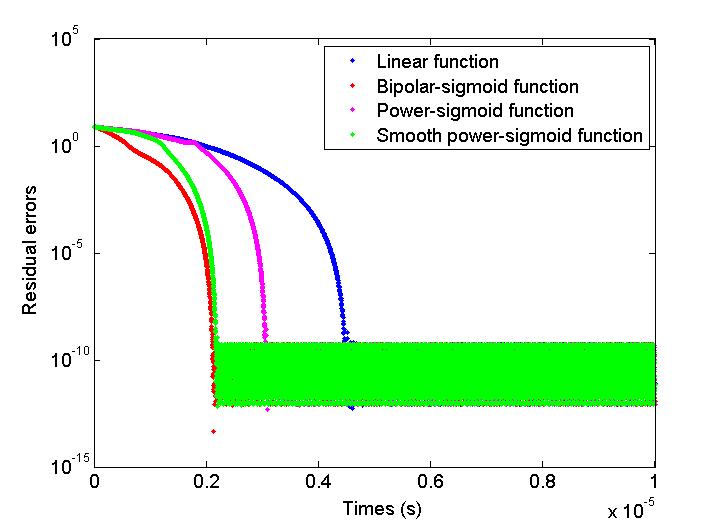}}\\
\subfigure[$\gamma=10$, $q=(-5,3)^{\top}$.]{\includegraphics[width=3in, height=1.8in]{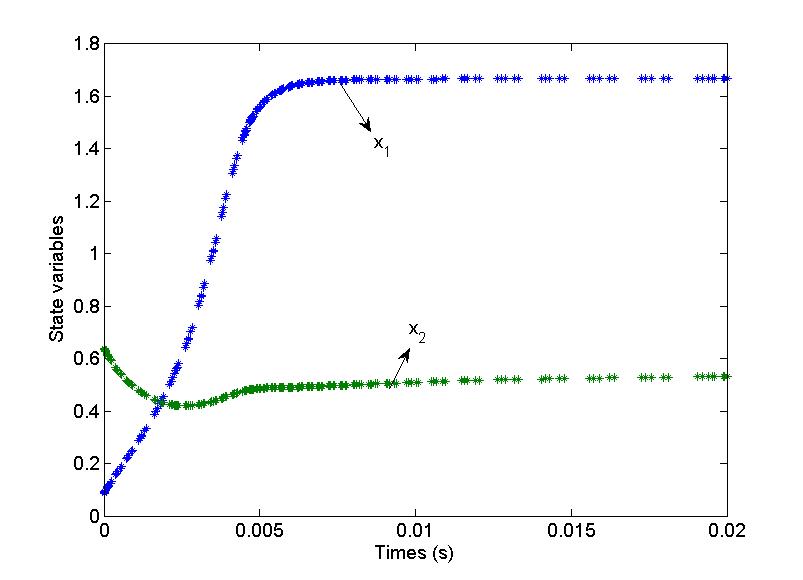}}
\subfigure[$\gamma=1\times 10^6$, $q=(-5,3)^{\top}$.]{\includegraphics[width=3in, height=1.8in]{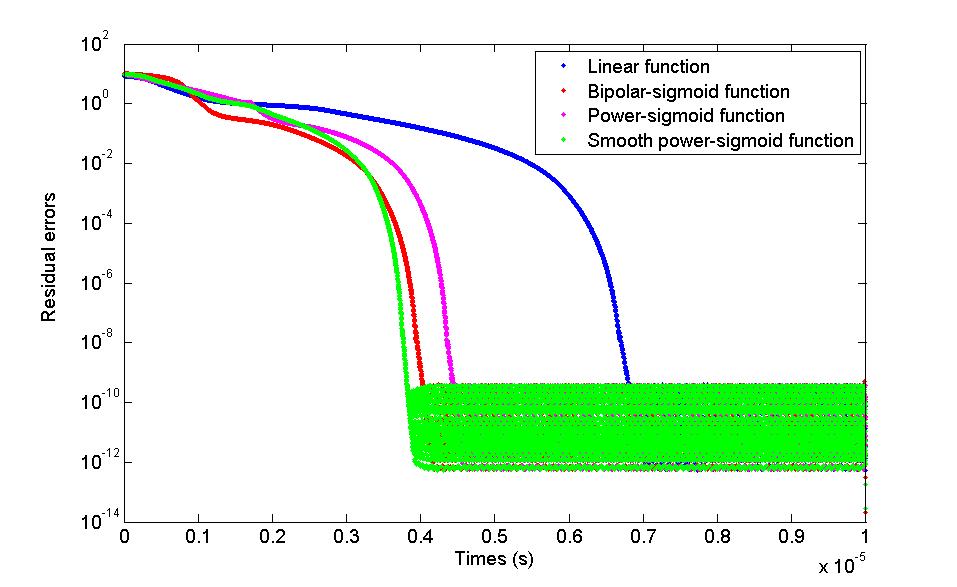}}\\
\subfigure[$\gamma=10$, $q=(5,3)^{\top}$.]{\includegraphics[width=3in, height=1.8in]{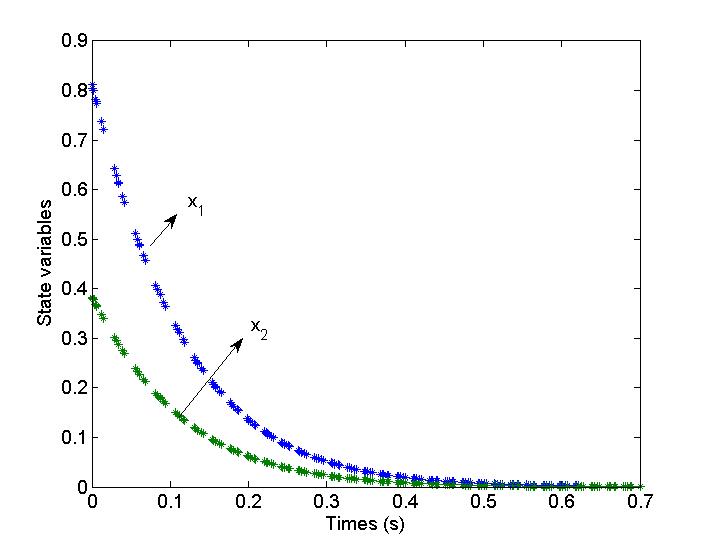}}
\subfigure[$\gamma=1\times 10^6$, $q=(5,3)^{\top}$.]{\includegraphics[width=3in, height=1.8in]{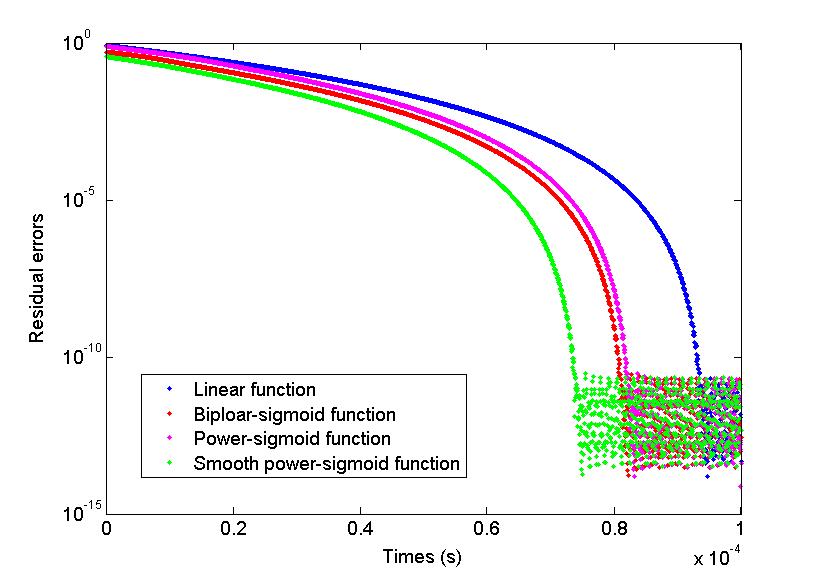}}
\caption{Trajectories of state variables and residual errors of the NGDSs for Example \ref{eg1}.}\label{figure 1}
\end{figure}

derived by employing the NGDS with $\gamma=1\times 10^6$  shown in Figures \ref{figure 1} (b), (d), (f) and Figure \ref{figure 2} (b) and (d), respectively, where  blue stars indicate that $f(\cdot)$ is the linear function, red stars  correspond to the bipolar-sigmoid function $f_{bs}(x,5)$,
pink stars denote that power-sigmoid function $f_{ps}(x,3,5)$  and green stars are generated using $f_{sps}(x,3,7)$ as the smooth power-sigmoid function.
\begin{figure}[!h]
\centering
\subfigure[$\gamma=10$, $q=(2,-3)^{\top}$.]{\includegraphics[width=3in, height=2.2in]{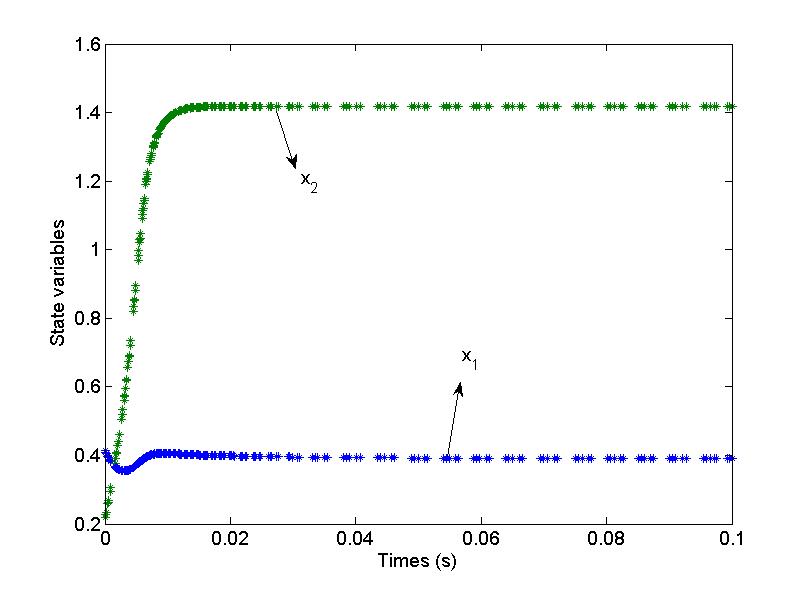}}
\subfigure[$\gamma=1\times 10^6$, $q=(2,-3)^{\top}$.]{\includegraphics[width=3in, height=2.2in]{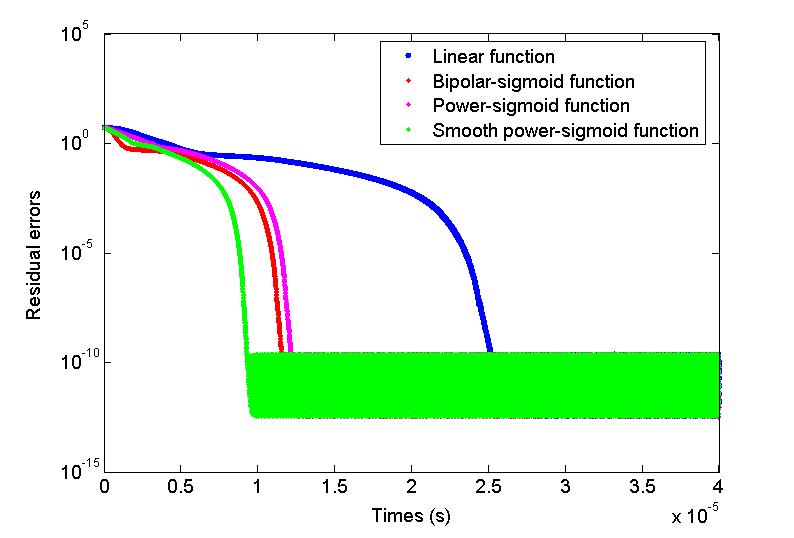}}\\
\subfigure[$\gamma=10$, $q=(0,-5)^{\top}$.]{\includegraphics[width=3in, height=2.2in]{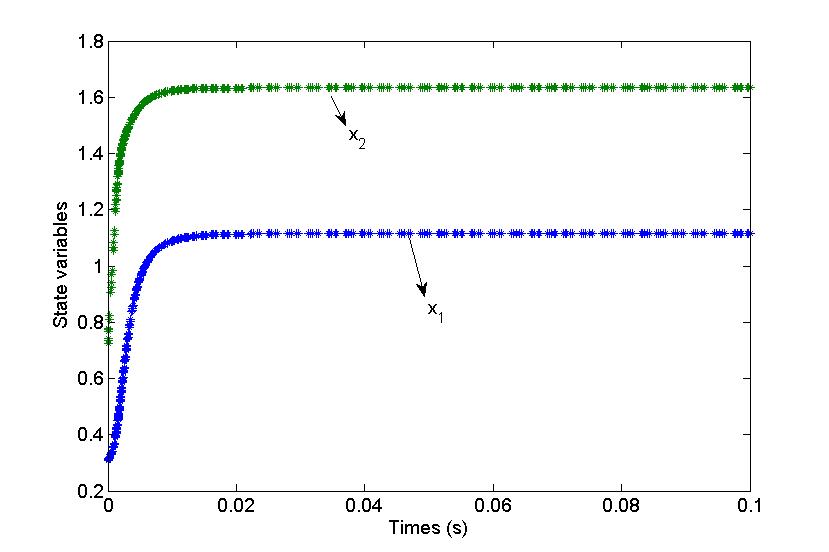}}
\subfigure[$\gamma=1\times 10^6$, $q=(0,-5)^{\top}$.]{\includegraphics[width=3in, height=2.2in]{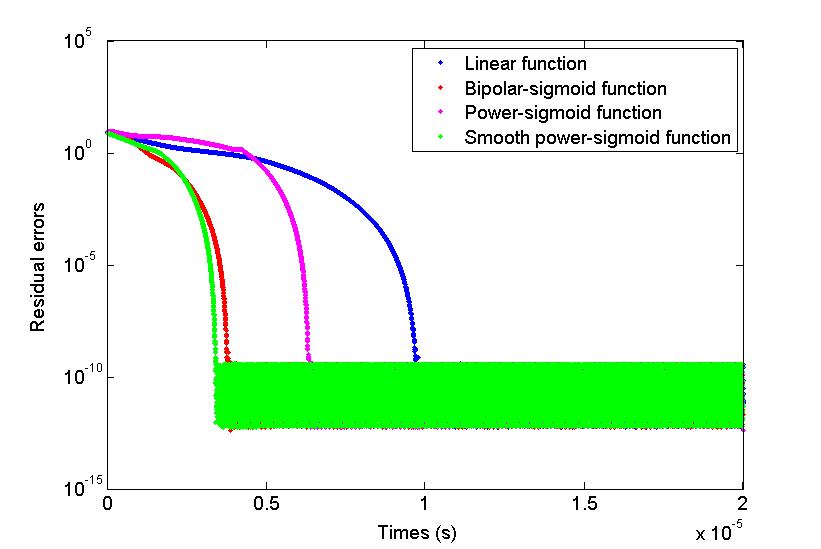}}\\
\caption{Trajectories of state variables and residual errors of the NGDSs for Example \ref{eg1}.}\label{figure 2}
\end{figure}
\begin{exmple} \label{eg2}
Let $\mathcal{A}\in \mathbb{R}^{[5,3]}$ be defined by $a_{kkkkk}=k$ for $k=1,2,3$ and
 $a_{i_1i_2i_3i_4i_5}=0$ otherwise. This tensor is both a strong $P$-tensor \cite{Bai2016Global} and a strong strictly semi-positive \cite{Han2018}.
\end{exmple}
For different vectors $q\in \mathbb{R}^3$, we take  initial vector as $x_0={\rm{rand}}(3,1)$. Trajectories of state variables corresponding to the LGDS with different $\gamma$ are shown in Figure \ref{figure 3} (a), (c), (e) and Figure \ref{figure 4} (a), (c) and (e), respectively.
Residual errors (\ref{eq6.2})
derived by employing the NGDS with $\gamma=1\times 10^6$  shown in Figures \ref{figure 3} (b), (d), (f) and Figure \ref{figure 4} (b), (d) and (f), respectively.

\begin{figure}[H]
\centering
\subfigure[$\gamma=1000$, $q=(1,2,3)^{\top}$.]{\includegraphics[width=3in, height=2.2in]{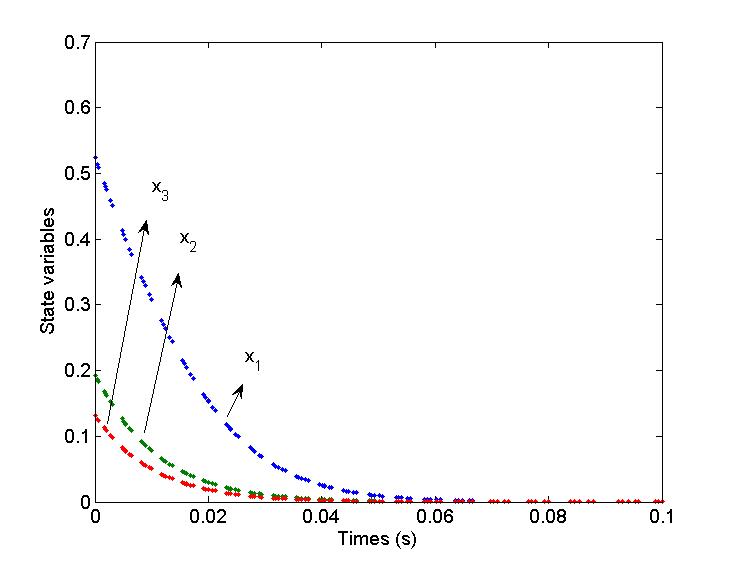}}
\subfigure[$\gamma=1\times 10^6$, $q=(1,2,3)^{\top}$.]{\includegraphics[width=3in, height=2.2in]{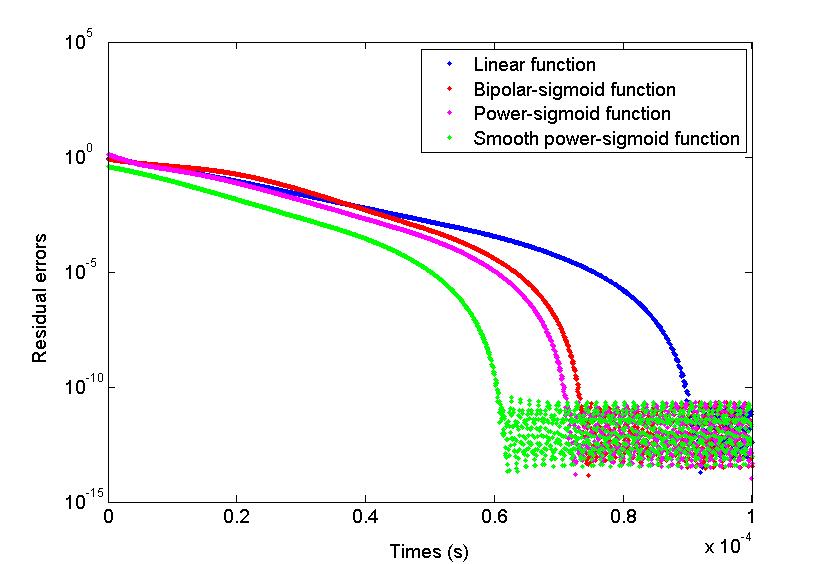}}\\
\subfigure[$\gamma=1000$, $q=(1,-2,3)^{\top}$.]{\includegraphics[width=3in, height=2.2in]{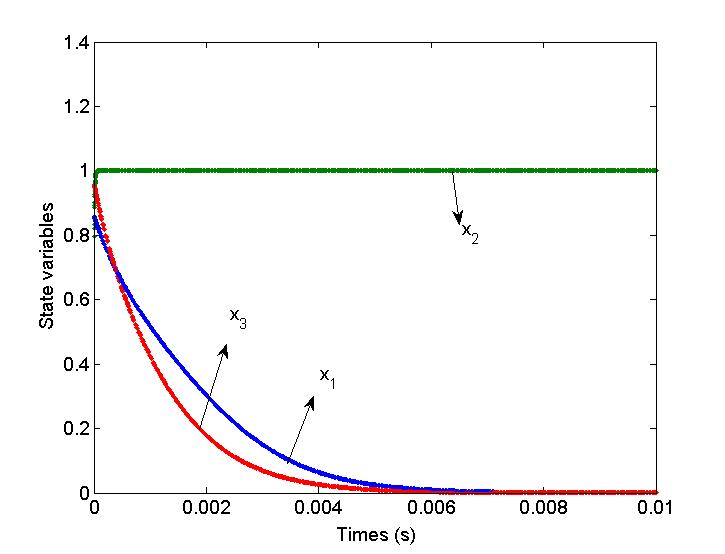}}
\subfigure[$\gamma=1\times 10^6$, $q=(1,-2,3)^{\top}$.]{\includegraphics[width=3in, height=2.2in]{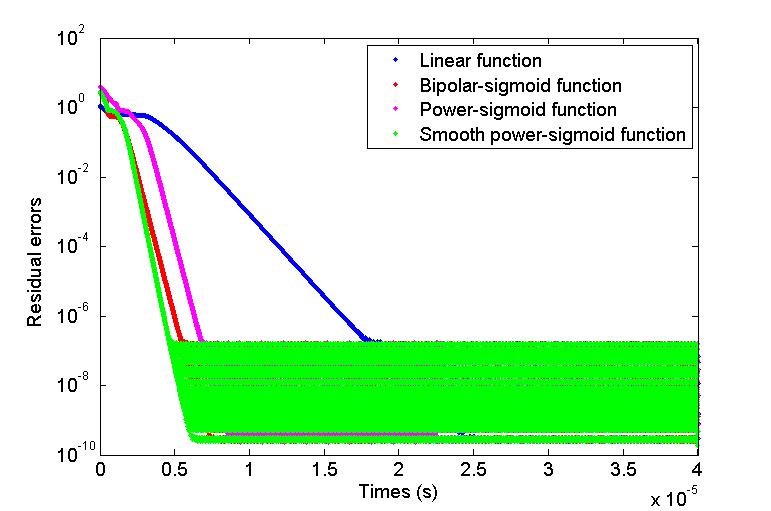}}\\
\subfigure[$\gamma=100$, $q=(-3,-2,-3)^{\top}$.]{\includegraphics[width=3in, height=2.2in]{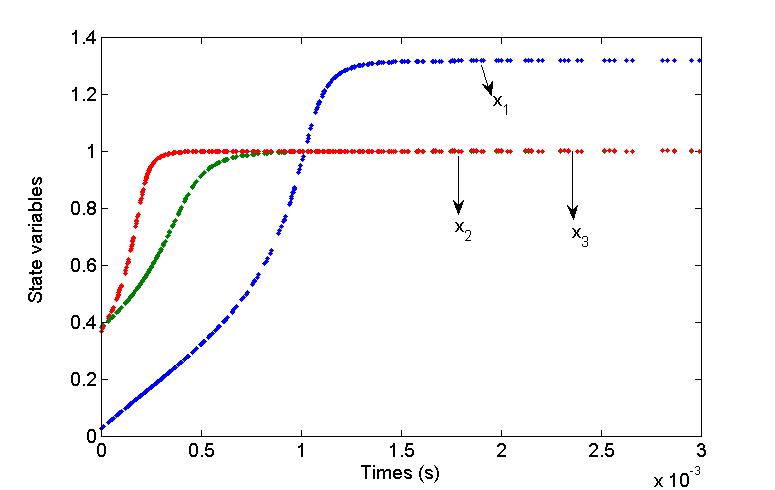}}
\subfigure[$\gamma=1\times 10^6$, $q=(-3,-2,-3)^{\top}$.]{\includegraphics[width=3in, height=2.2in]{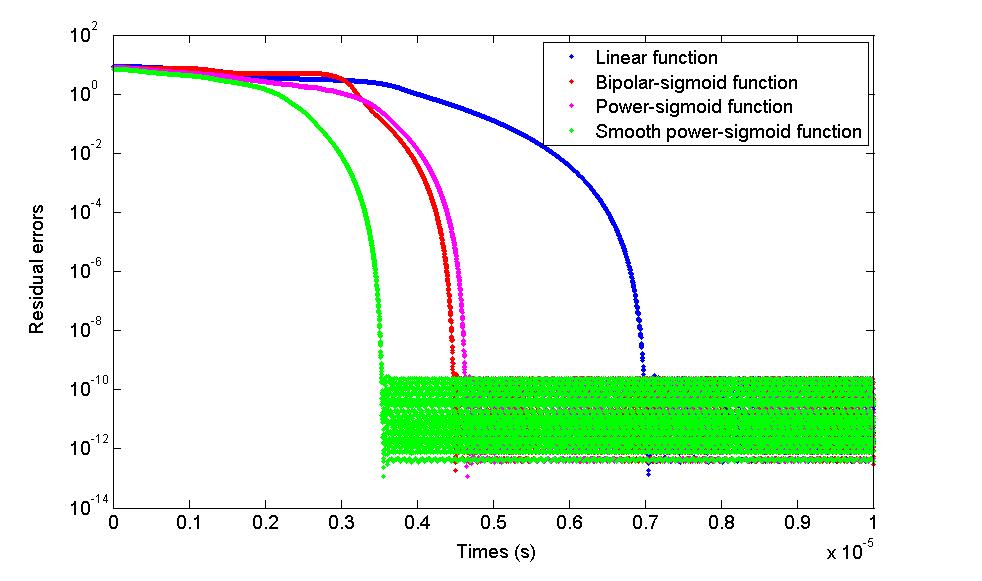}}
\caption{Trajectories of state variables and residual errors of the NGDSs for Example \ref{eg2}.}\label{figure 3}
\end{figure}
\begin{figure}[H]
\centering
\subfigure[$\gamma=100$, $q=(3,3,3)^{\top}$.]{\includegraphics[width=3in, height=2.2in]{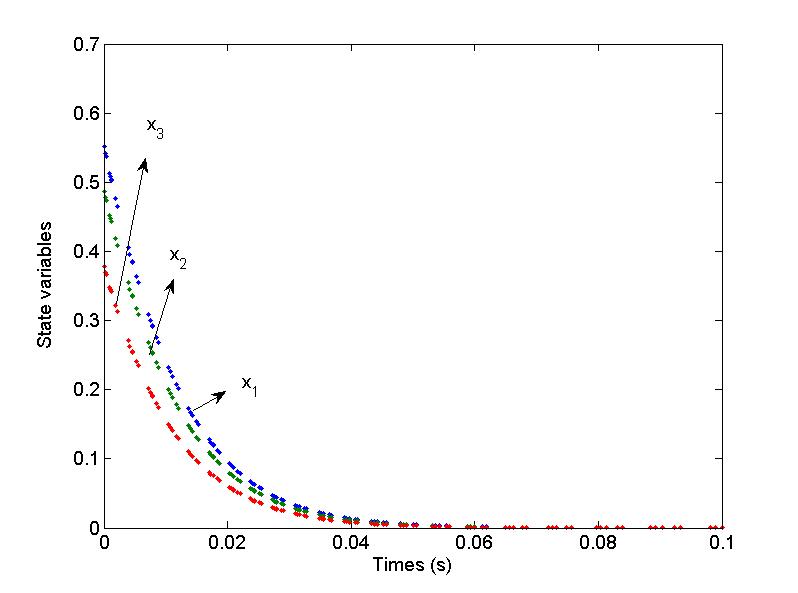}}
\subfigure[$\gamma=1\times 10^6$, $q=(3,3,3)^{\top}$.]{\includegraphics[width=3in, height=2.2in]{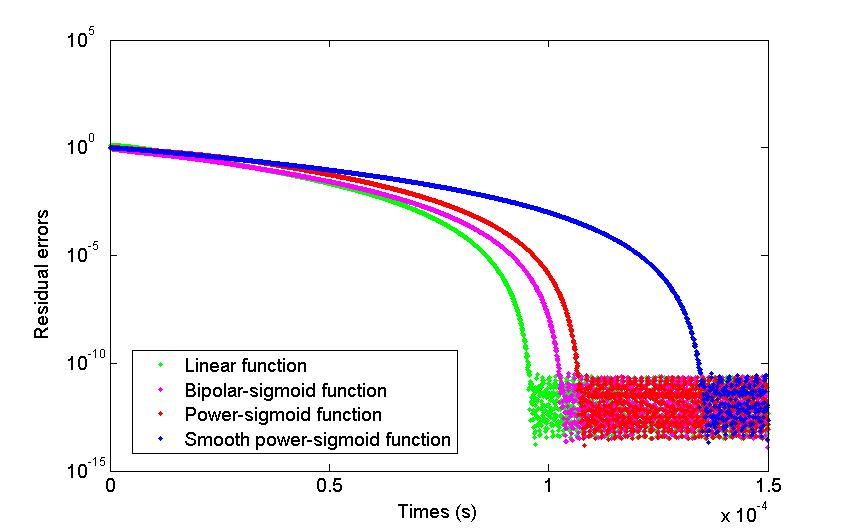}}\\
\subfigure[$\gamma=100$, $q=(-3,-1,-2)^{\top}$.]{\includegraphics[width=3in, height=2.2in]{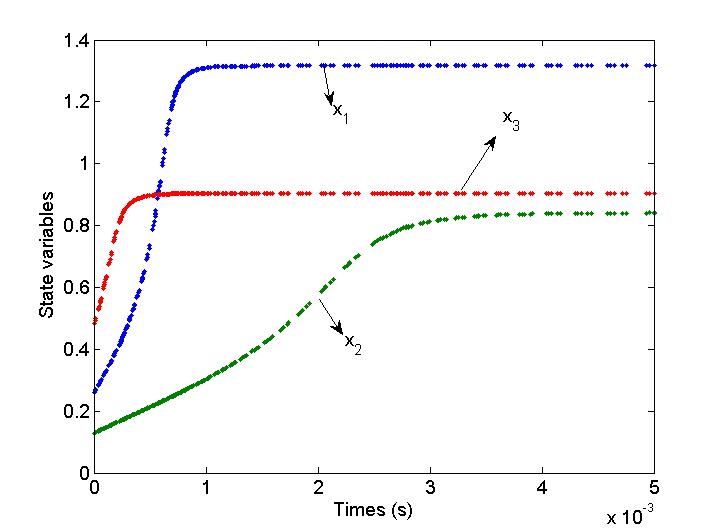}}
\subfigure[$\gamma=1\times 10^6$, $q=(-3,-1,-2)^{\top}$.]{\includegraphics[width=3in, height=2.2in]{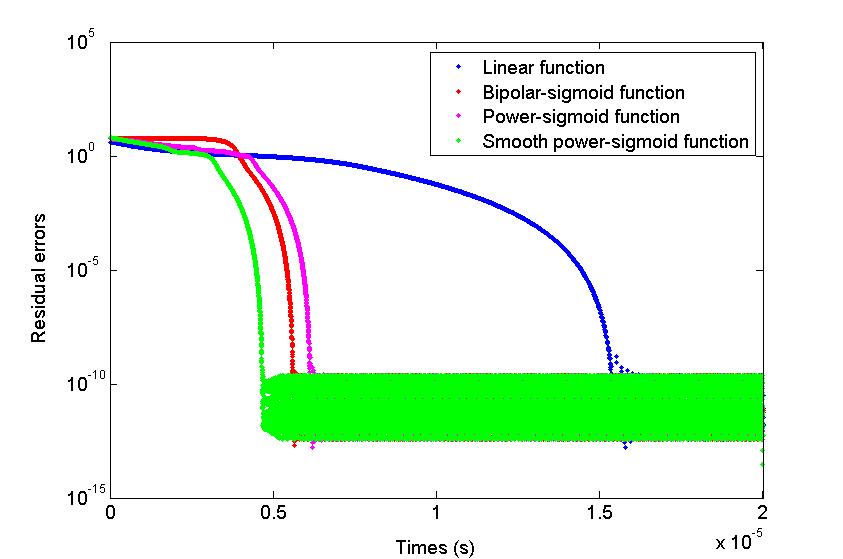}}\\
\subfigure[$\gamma=100$, $q=(-1,-1,-2)^{\top}$.]{\includegraphics[width=3in, height=2.2in]{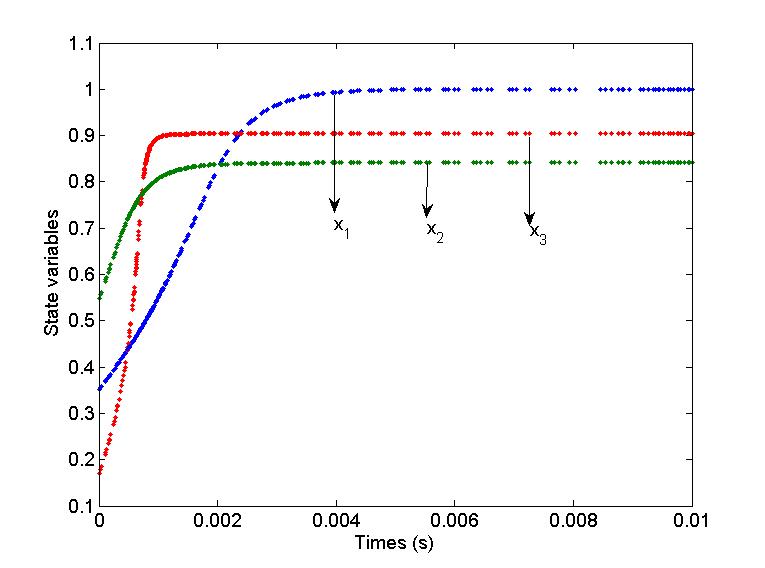}}
\subfigure[$\gamma=1\times 10^6$, $q=(-1,-1,-2)^{\top}$.]{\includegraphics[width=3in, height=2.2in]{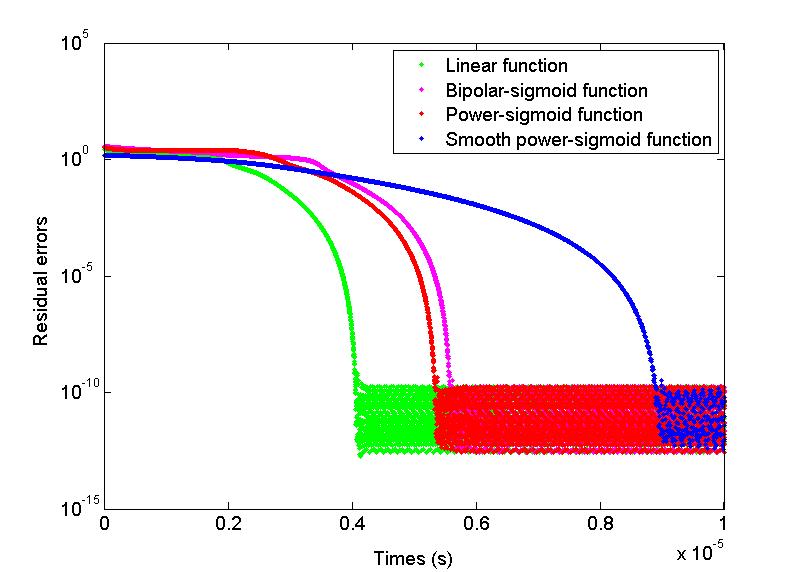}}
\caption{Trajectories of state variables and residual errors of the NGDSs for Example \ref{eg2}.}\label{figure 4}
\end{figure}

Where  blue stars indicate that $f(\cdot)$ is the linear function, red stars  correspond to the bipolar-sigmoid function $f_{bs}(x,7)$,
pink stars denote that power-sigmoid function $f_{ps}(x,5,7)$  and green stars are generated using $f_{sps}(x,5,9)$ as the smooth power-sigmoid function, respectively.

\begin{exmple} \label{eg3}
Let $\mathcal{A}\in \mathbb{R}^{[4,2]}$ be defined by
\[a_{1111}=1,\;a_{1112}=-2,\;a_{1122}=1,\;a_{2222}=1,\]
and all other $a_{i_1i_2i_3i_4}=0$. This tensor is  a $P$-tensor \cite{Bai2016Global}.
\end{exmple}
Taking $q=(0,-1)^{\top}$, it is easy to see that $x_{*}=(0,1)^{\top}$ and $x_{*}=(1,1)^{\top}$ are the solutions to the TCP$(\mathcal{A},q)$.

We choose  initial vector as $x_0=(0.1,0.5)^{\top}$ for $x_{*}=(0,1)^{\top}$ and $x_0=(1.5,1.1)^{\top}$ for $x_{*}=(1,1)^{\top}$. Trajectories of state variables corresponding to the LGDS with  $\gamma=100$ and  $\gamma=10000$ are shown in Figure \ref{figure 5} (a) and Figure \ref{figure 6} (a), respectively.

Residual errors (\ref{eq6.2}) derived by employing the NGDS with $\gamma=1\times 10^6$  shown in Figures \ref{figure 5} (b) and Figures \ref{figure 6} (b), where  blue stars indicate that $f(\cdot)$ is the linear function, red stars  correspond to the bipolar-sigmoid function $f_{bs}(x,7)$,
pink stars denote that power-sigmoid function $f_{ps}(x,5,9)$  and green stars are generated using $f_{sps}(x,7,11)$ as the smooth power-sigmoid function, respectively.

\begin{figure}[H]
\centering
\subfigure[$\gamma=100$, $x_{*}=(0,1)^{\top}$.]{\includegraphics[width=3in, height=2.2in]{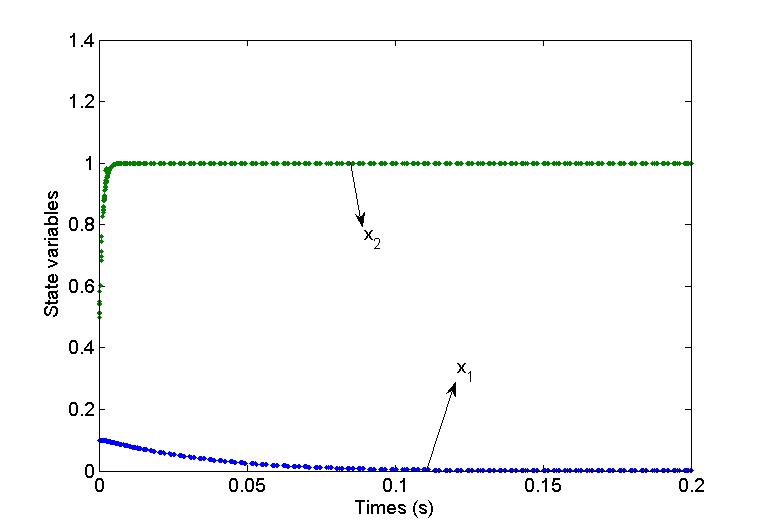}}
\subfigure[$\gamma=1\times 10^6$, $x_{*}=(0,1)^{\top}$.]{\includegraphics[width=3in, height=2.2in]{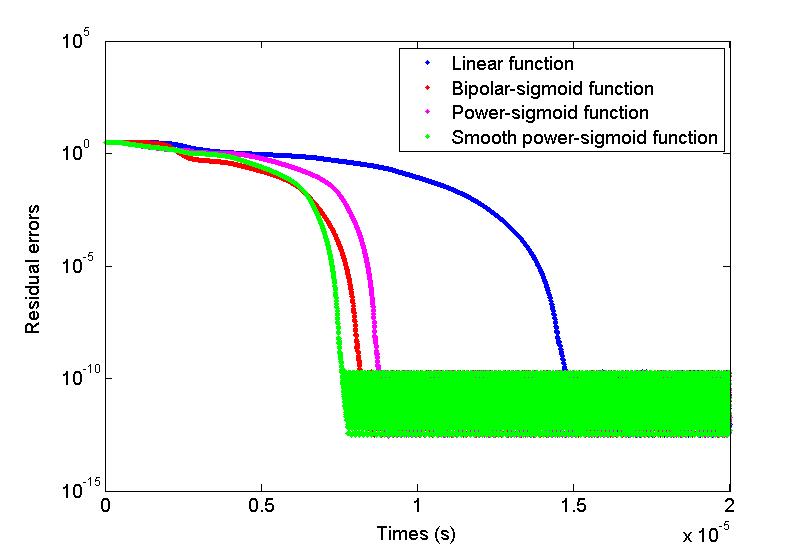}}
\caption{Trajectories of state variables and residual errors of the NGDSs for Example \ref{eg3}.}\label{figure 5}
\end{figure}

\begin{figure}[h]
\centering
\subfigure[$\gamma=10000$, $x_{*}=(1,1)^{\top}$.]{\includegraphics[width=3in, height=2.2in]{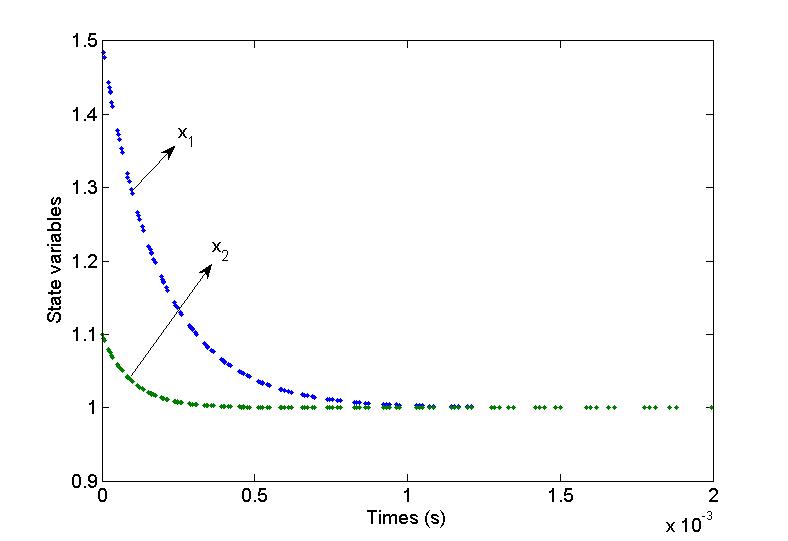}}
\subfigure[$\gamma=1\times 10^6$, $x_{*}=(1,1)^{\top}$.]{\includegraphics[width=3in, height=2.2in]{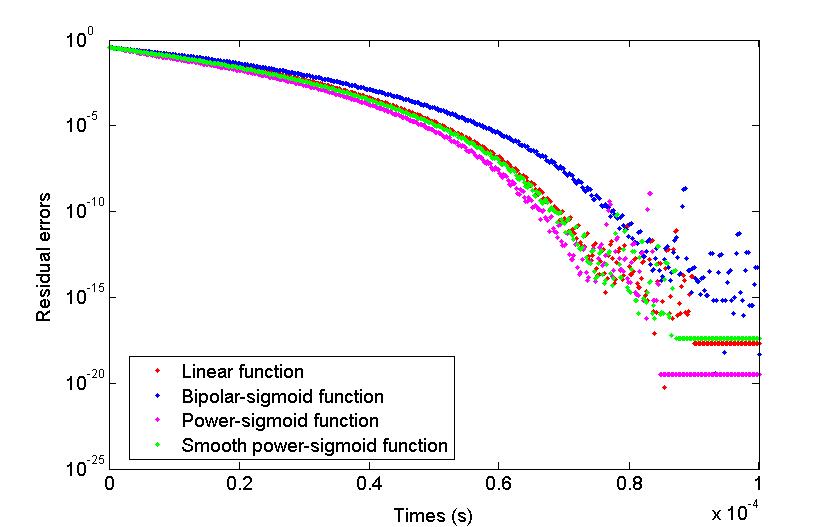}}
\caption{Trajectories of state variables and residual errors of the NGDSs for Example \ref{eg3}.}\label{figure 6}
\end{figure}

From the computer simulation results derived in the illustrative examples, the following conclusions can be highlighted:

1. The NGDS models with different types of activation functions  presented in section 3, exactly and efficiently solve the TCP$({\mathcal A},q)$ with $P$-tensors, strong $P$-tensor and strong strictly semi-positive tensor.

2. NGDS models could achieve different performances if different activation function arrays are used. In general, the convergence
performance of nonlinear activation functions is superior to that of the
linear activation function.


\begin{thebibliography}{100}

\bibitem{Bai2016Global}
{\sc X.~Bai, Z.~Huang, and Y.~Wang}, {\em Global uniqueness and solvability for
  tensor complementarity problems}, Journal of Optimization Theory and
  Applications, 170 (2016), pp.~72--84.

\bibitem{Berman1994Plemmons}
{\sc A.~Berman and R.~J. Plemmons}, {\em Nonnegative Matrices in the
  Mathematical Sciences. SIAM, Philadelphia, PA}, 1994.

\bibitem{bouzerdoum1993neural}
{\sc A.~Bouzerdoum and T.~R. Pattison}, {\em Neural network for quadratic
  optimization with bound constraints}, IEEE Transactions on Neural Networks, 4
  (1993), pp.~293--304.

\bibitem{Che2016Positive}
{\sc M.~Che, L.~Qi, and Y.~Wei}, {\em Positive-definite tensors to nonlinear
  complementarity problems}, Journal of Optimization Theory and Applications,
  168 (2016), pp.~475--487.

\bibitem{chua1984nonlinear}
{\sc L.~O. Chua and G.~Lin}, {\em Nonlinear programming without computation},
  IEEE Transactions on Circuits and Systems, 31 (1984), pp.~182--188.

\bibitem{Clarke1983Optimization}
{\sc F.~H. Clarke}, {\em Optimization and {N}onsmooth {A}nalysis}, SIAM,
  Philadelphia, PA, 1990.

\bibitem{deluca1996a}
{\sc T.~De~Luca, F.~Facchinei, and C.~Kanzow}, {\em A semismooth equation
  approach to the solution of nonlinear complementarity problems}, Mathematical
  Programming, 75 (1996), pp.~407--439.

\bibitem{Ding2015P}
{\sc W.~Ding, Z.~Luo, and L.~Qi}, {\em {P}-tensors, ${P}_0$-tensors, and their
  applications}, Linear Algebra and its Applications, 555 (2018), pp.~336--354.

\bibitem{Du2018a}
{\sc S.~Du and L.~Zhang}, {\em A mixed integer programming approach to the tensor complementarity problem}, arXiv:1804.00406v1,  (2018).

\bibitem{Du2018}
{\sc S.~Du, L.~Zhang, C.~Chen, and L.~Qi}, {\em Tensor absolute value
  equations}, Science China Mathematics,
  (2018,https://doi.org/10.1007/s11425-017-9238-6).

\bibitem{Facchinei2003Finite}
{\sc F.~Facchinei and J.-S. Pang}, {\em Finite-dimensional {V}ariational
  {I}nequalities and {C}omplementarity {P}roblems}, Springer Science \&
  Business Media, 2007.

\bibitem{facchinei1997a}
{\sc F.~Facchinei and J.~Soares}, {\em A new merit function for nonlinear
  complementarity problems and a related algorithm}, SIAM Journal on
  Optimization, 7 (1997), pp.~225--247.

\bibitem{Feng2006Gradient}
{\sc D.~Feng, S.~Yang, and T.~Chen}, {\em Gradient-based identification methods
  for hammerstein nonlinear armax models}, Nonlinear Dynamics, 45 (2006),
  pp.~31--43.

\bibitem{Fischer1992A}
{\sc A.~Fischer}, {\em A special newton-type optimization method},
  Optimization, 24 (1992), pp.~269--284.

\bibitem{Gowda2016Z}
{\sc M.~S. Gowda, Z.~Luo, L.~Qi, and N.~Xiu}, {\em $\mathcal{Z}$-tensors and
  complementarity problems}, arXiv:1510.07933,  (2015).

\bibitem{HAN201749}
{\sc L.~Han}, {\em A homotopy method for solving multilinear systems with
  $\mathcal{M}$-tensors}, Applied Mathematics Letters, 69 (2017), pp.~49 -- 54.

\bibitem{Han2018}
{\sc L.~Han}, {\em A continuation method for tensor complementarity problems},
  arXiv:1803.01433v1,  (2018).

\bibitem{hopfield1985neural}
{\sc J.~J. Hopfield and D.~W. Tank}, {\em Neural computation of decisions in
  optimization problems}, Biological Cybernetics, 52 (1985), pp.~141--152.

\bibitem{Huang2017Formulating}
{\sc Z.~H. Huang and L.~Qi}, {\em Formulating an $n$-person noncooperative game
  as a tensor complementarity problem}, Computational Optimization and
  Applications, 66 (2017), pp.~557--576.

\bibitem{Kleinman2003The}
{\sc D.~L. Kleinman and M.~Athans}, {\em The design of suboptimal linear
  time-varying systems}, IEEE Transactions on Automatic Control, 13 (2003),
  pp.~150--159.

\bibitem{Li2013Accelerating}
{\sc S.~Li, S.~Chen, and B.~Liu}, {\em Accelerating a recurrent neural network
  to finite-time convergence for solving time-varying sylvester equation by
  using a sign-bi-power activation function}, Neural Processing Letters, 37
  (2013), pp.~189--205.

\bibitem{LIZHI19999}
{\sc L.~Liao and H.~Qi}, {\em A neural network for the linear complementarity
  problem}, Mathematical and Computer Modelling, 29 (1999), pp.~9--18.

\bibitem{LIAO2001}
{\sc L.~Liao, H.~Qi, and L.~Qi}, {\em Solving nonlinear complementarity
  problems with neural networks: a reformulation method approach}, Journal of
  Computational and Applied Mathematics, 131 (2001), pp.~343--359.

\bibitem{Liu2017Tensor}
{\sc D.~Liu, W.~Li, and S.~W. Vong}, {\em Tensor complementarity problems: the
  gus-property and an algorithm}, Linear and Multilinear Algebra, 66 (2018),
  pp.~1726--1749.

\bibitem{Luo2017The}
{\sc Z.~Luo, L.~Qi, and N.~Xiu}, {\em The sparsest solutions to z-tensor
  complementarity problems}, Optimization Letters, 11 (2017), pp.~471--482.

\bibitem{Qi1}
{\sc L.~Qi}, {\em Eigenvalues of a real supersymmetric tensor}, J. Symbolic
  Comput., 40 (2005), pp.~1302--1324.

\bibitem{Ramezani2013Nonlinear}
{\sc S.~Ramezani}, {\em Nonlinear vibration analysis of micro-plates based on
  strain gradient elasticity theory}, Nonlinear Dynamics, 73 (2013),
  pp.~1399--1421.

\bibitem{rodriguezvazquez1990nonlinear}
{\sc A.~Rodriguezvazquez, R.~Dominguezcastro, A.~Rueda, J.~L. Huertas, and
  E.~Sanchezsinencio}, {\em Nonlinear switched capacitor 'neural' networks for
  optimization problems}, IEEE Transactions on Circuits and Systems, 37 (1990),
  pp.~384--398.

\bibitem{Song2015Properties}
{\sc Y.~Song and L.~Qi}, {\em Properties of some classes of structured
  tensors}, Journal of Optimization Theory and Applications, 165 (2015),
  pp.~854--873.

\bibitem{Song2016Tensor}
\leavevmode\vrule height 2pt depth -1.6pt width 23pt, {\em Tensor
  complementarity problem and semi-positive tensors}, Journal of Optimization
  Theory and Applications, 169 (2016), pp.~1069--1078.

\bibitem{Song2016Properties}
{\sc Y.~Song and G.~Yu}, {\em Properties of solution set of tensor
  complementarity problem}, Journal of Optimization Theory and Applications,
  170 (2016), pp.~85--96.

\bibitem{Wang2016Exceptionally}
{\sc Y.~Wang, Z.~H. Huang, and X.~L. Bai}, {\em Exceptionally regular tensors
  and tensor complementarity problems}, Optimization Methods and Software, 31
  (2016), pp.~815--828.

\bibitem{Xie2017An}
{\sc S.~L. Xie, D.~H. Li, and H.~R. Xu}, {\em An iterative method for finding
  the least solution to the tensor complementarity problem}, Journal of
  Optimization Theory and Applications, 175 (2017), pp.~119--136.

\bibitem{Zabczyk2015Mathematical}
{\sc J.~Zabczyk}, {\em Mathematical Control Theory: an Introduction}, Springer
  Science \& Business Media, 2009.

\bibitem{zak1995solving}
{\sc S.~H. Zak, V.~Upatising, and S.~Hui}, {\em Solving linear programming
  problems with neural networks: a comparative study}, IEEE Transactions on
  Neural Networks, 6 (1995), pp.~94--104.

\bibitem{Zhang2006A}
{\sc Y.~Zhang}, {\em A set of nonlinear equations and inequalities arising in
  robotics and its online solution via a primal neural network},
  Neurocomputing, 70 (2006), pp.~513--524.

\bibitem{Zhang2009Global}
{\sc Y.~Zhang, Y.~Shi, K.~Chen, and C.~Wang}, {\em Global exponential
  convergence and stability of gradient-based neural network for online matrix
  inversion}, Applied Mathematics and Computation, 215 (2009), pp.~1301--1306.


\end{thebibliography}
\end{document}